\newtheorem{thm}{Theorem}[section]
\newtheorem{prop}[thm]{Proposition}
\newtheorem{cor}[thm]{Corollary}
\newtheorem{lemma}[thm]{Lemma}
\newtheorem*{thm*}{Theorem}						
\newtheorem*{prop*}{Proposition}
\newtheorem*{lemma*}{Lemma}
\newtheorem*{cor*}{Corollary}
\newtheorem*{conj*}{Conjecture}
\theoremstyle{definition}
\newtheorem{definition}[thm]{Definition}
\newtheorem{remark}[thm]{Remark}
\newtheorem{notation}[thm]{Notation}
\newcommand{\R}{\mathbb{R}}
\newcommand{\C}{\mathbb{C}}
\renewcommand{\vec}[1]{\mathbf{#1}}
\renewcommand{\P}{\mathbb{P}}
\let\hom\relax 
\DeclareMathOperator{\hom}{hom}
\DeclareMathOperator{\Hom}{Hom}
\DeclareMathOperator{\Ext}{Ext}
\DeclareMathOperator{\ext}{ext}
\DeclareMathOperator{\pic}{Pic}
\let\O\relax
\DeclareMathOperator{\O}{\mathcal{O}}
\DeclareMathOperator{\ch}{ch}
\DeclareMathOperator{\rk}{rk}
\DeclareMathOperator{\knum}{K_{num}}
\numberwithin{equation}{thm}
\let\c@theorem\c@figure
\begin{document}

\title[Stability conditions for restrictions of vector bundles on projective surfaces]{Stability conditions for restrictions of vector bundles on projective surfaces}

\author[J. Kopper]{John Kopper}
\address{Department of Mathematics, The Pennsylvania State University, University Park, PA, 16802}
\email{kopper@psu.edu}

\thanks{During the preparation of this article the author was partially supported by NSF RTG grant DMS-1246844.}

\subjclass[2010]{Primary: 14J60, 14H60. Secondary: 14H50, 14D20}
\keywords{Moduli spaces of sheaves, stable vector bundles, restrictions of vector bundles}

\begin{abstract}
Using Bridgeland stability conditions, we give sufficient criteria for a stable vector bundle on a smooth complex projective surface to remain stable when restricted to a curve. We give a stronger criterion when the vector bundle is a general vector bundle on the plane. As an application, we compute the cohomology of such bundles for curves that lie in the plane or on Hirzebruch surfaces.
\end{abstract}

\maketitle

\section{Introduction}
In this paper we give sufficient criteria for a stable bundle on a smooth complex projective surface to remain stable when restricted to a curve. The main results in this subject are due to Flenner \cite{flenner} and Mehta-Ramanathan \cite{mehta-ramanathan} who give criteria for restrictions of bundles to remain stable on divisors and complete intersections. In the case of a surface, Flenner's theorem becomes:

\begin{thm*}[Flenner]
Let $(X,H)$ be a smooth polarized surface. If $E$ is a $\mu_H$-semistable bundle of rank $r$ on $X$ and $C\subset X$ is a general curve of class $dH$, then $E|_C$ is semistable if
\[
\frac{d+1}{2} > H^2 \max\left\{ \frac{r^2-1}{4}, 1\right\}.
\]
\end{thm*}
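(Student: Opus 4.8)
The plan is to argue the contrapositive. Suppose $E|_C$ fails to be $\mu$-semistable for a general $C$ in the linear system $S:=|dH|\cong\P^N$, and let $\mathcal{C}\subset X\times S$ be the universal curve, with projections $p\colon\mathcal{C}\to X$ and $q\colon\mathcal{C}\to S$. Shrinking $S$ to a dense open $S^{\circ}$ over which the $C_t$ are smooth (Bertini), and using uniqueness of Harder--Narasimhan filtrations together with standard spreading--out, I would arrange that the maximal destabilizing subsheaf $F_t\subset E|_{C_t}$ and the minimal destabilizing quotient $E|_{C_t}\twoheadrightarrow G_t$ vary in flat families over $S^{\circ}$, with constant ranks and constant slopes $\mu_1:=\mu_{\max}(E|_{C_t})>\mu(E|_{C_t})>\mu_2:=\mu_{\min}(E|_{C_t})$. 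Everything then reduces to sandwiching $\mu_1-\mu_2$. From below this is elementary: $\deg_{C_t}(E|_{C_t})=dH\cdot c_1(E)$ is a fixed integer and all subsheaf degrees on $C_t$ are integers, so coarsening the Harder--Narasimhan filtration to two steps and optimizing the rank of the destabilizing piece --- using parity of degrees to sharpen the even--rank case, and the fact that for $r=2$ a line subsheaf already forces a gap $\geq 1$ --- should give
\[
\mu_{\max}(E|_{C_t})-\mu_{\min}(E|_{C_t})\ \geq\ \frac{1}{\max\{(r^2-1)/4,\,1\}}.
\]

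For the substantive half I would bound $\mu_1-\mu_2$ from above by a Grauert--Mülich type estimate whose quality grows with $d$. Restricting $\mathcal{C}$ to a general pencil $\ell\cong\P^{1}\subseteq S$, the total space $\mathcal{C}_{\ell}$ is the blow--up $\sigma\colon\widetilde{X}\to X$ at the $(dH)^2=d^2H^2$ reduced base points, fibered over $\P^1$ with strict transform $\widetilde{C}_t\cong C_t$ as general fibre; the crucial point is that each exceptional curve $E_i$ is a \emph{section} of this fibration, so $\sigma^{*}E|_{E_i}\cong\O_{\P^1}^{\oplus r}$ and the fibre class is $d\,\sigma^{*}H-\sum_iE_i$. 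Since subsheaves of $\sigma^{*}E$ push down to subsheaves of $E$ of the same rank and $\sigma^{*}H$-slope, $\sigma^{*}E$ is $\mu_{\sigma^{*}H}$-semistable; expressing the $C_t$-slope of the relative Harder--Narasimhan pieces $\mathcal{V}_j\subset\sigma^{*}E$ through the fibre class $d\,\sigma^{*}H-\sum_iE_i$ --- where the $\sigma^{*}H$-part is bounded by semistability and the $E_i$-part by the fact that $\mathcal{V}_j|_{E_i}$ is a flag in a trivial bundle on each section --- bounds the Harder--Narasimhan gap of $E|_{C_t}$. Pushing this through and optimizing over the destabilizing rank, I expect a bound of the shape $\mu_{\max}(E|_{C_t})-\mu_{\min}(E|_{C_t})\leq dH^2/(d^2+2d-1)$, which tends to $0$ with $d$. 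Comparing with the previous display and clearing denominators then yields
\[
d-\frac{1}{d}\ \leq\ H^2\max\Bigl\{\tfrac{r^2-1}{4},\,1\Bigr\}-2,
\]
the negation of the hypothesis, so $E|_C$ is $\mu$-semistable for general $C$.

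The main obstacle is making this upper bound effective enough to land on the sharp constant: one must control the intersection numbers $c_1(\mathcal{V}_j)\cdot E_i$ against all $d^2H^2$ exceptional sections at once (this is where a Bogomolov/Hodge--index input on $\widetilde{X}$ enters), handle the non-ampleness of $\sigma^{*}H$ (e.g.\ by perturbing to $\sigma^{*}H+\varepsilon f$ with $f$ a fibre and passing to a limit, or by working directly on $X$), and carry the rank optimization all the way through; the rest of the argument is formal. An alternative route, which bypasses the blow--up bookkeeping, is to recast the restriction problem in terms of Bridgeland stability conditions on $X$, as in the body of the present paper.
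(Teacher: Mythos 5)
First, a point of order: the paper does not prove this statement at all --- it is quoted from Flenner \cite{flenner} as background, and the paper's own restriction results (Theorems \ref{thm:restriction_allsurfaces} and \ref{thm:restriction_p2}) are proved by a different route (Bridgeland wall-crossing) and yield bounds depending on $\Delta(E)$, so they do not recover Flenner's statement either. Your proposal must therefore stand on its own. Its architecture is the right one, and the elementary half is essentially complete: if $E|_{C_t}$ is unstable, picking a step $F$ of the Harder--Narasimhan filtration with $\mu(F)>\mu(E|_{C_t})>\mu(E|_{C_t}/F)$ and $\rk F=\rho$ gives $\mu_{\max}-\mu_{\min}\geq (r\deg F-\rho\deg E|_{C_t})/(\rho(r-\rho))$, and the integrality and parity refinements you indicate (the $\rho=r/2$ case and the $r=2$ case) do produce the lower bound $1/\max\{(r^2-1)/4,1\}$.

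The genuine gap is in the other half, which is the entire content of the theorem: the upper bound on $\mu_{\max}(E|_{C_t})-\mu_{\min}(E|_{C_t})$ decaying in $d$ is only asserted (``I expect a bound of the shape\dots''), and the mechanism you propose cannot deliver it. Writing the fibre class on the blow-up as $d\,\sigma^{*}H-\sum_iE_i$, the observation that a saturated $\mathcal{V}\subset\sigma^{*}E$ restricts on each exceptional section to a subsheaf of $\O_{\P^1}^{\oplus r}$ gives $c_1(\mathcal{V})\cdot E_i\leq 0$, hence $-\sum_i c_1(\mathcal{V})\cdot E_i\geq 0$; this bounds the fibre degree of $\mathcal{V}$ from \emph{below}, the wrong direction for controlling $\mu_{\max}$. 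The needed upper bound on $\sum_i(-c_1(\mathcal{V})\cdot E_i)$ over all $d^2H^2$ sections is exactly where the work lies, and the tool you defer to there (Bogomolov inequality and Hodge index on $\widetilde X$) is the engine of \emph{Bogomolov's} and Langer's restriction theorems: it inevitably introduces the discriminant $\Delta(E)$ into the bound and so cannot produce Flenner's bound, which depends only on $r$ and $H^2$. Flenner's actual argument is structurally different: it works over the whole linear system $|dH|$ rather than a pencil, takes the relative Harder--Narasimhan filtration of the pullback of $E$ to the incidence variety, and shows that if this filtration does not descend to a destabilizing subsheaf of $E$ on $X$ then a certain nonzero map of sheaves exists, whose nonvanishing forces a numerical inequality involving $\dim|dH|$ --- this dimension count is the source of the left-hand side of the stated inequality. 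None of that mechanism appears in your outline, so the proof as proposed does not go through.
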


Bogomolov gave a more precise restriction theorem for surfaces \cite{bogomolov} (see also \cite{huybrechts-lehn}). Notably, Bogomolov's result applies to \emph{any} smooth curve moving in an ample class. For a vector bundle $E$, let
\[
\Delta(E) =\frac{1}{2} \cdot \frac{c_1(E)^2}{r^2} - \frac{ch_2(E)}{r}.
\]
Then we have:

\begin{thm*}[Bogomolov]
Let $(X,H)$ be a smooth polarized surface. If $E$ is $\mu_H$-stable bundle of rank $r$ on $X$ and $C \subset X$ is a smooth curve of class $dH$, then $E|_C$ is stable if
\[
2d > \binom{r}{\left\lfloor\frac{r}{2}\right\rfloor}\binom{r-2}{\left\lfloor\frac{r}{2}\right\rfloor - 1}r\Delta(E) + 1,
\]
\end{thm*}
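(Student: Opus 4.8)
The plan is to argue by contraposition: assuming $E|_C$ is not stable for a general smooth $C\in|dH|$, I will deduce $2d\le\binom{r}{\lfloor r/2\rfloor}\binom{r-2}{\lfloor r/2\rfloor-1}r\Delta(E)+1$.

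\textbf{Reduction to a destabilizing line subsheaf.} If $E|_C$ is not stable it carries a saturated destabilizing subsheaf of some rank $s$, $1\le s\le r-1$; as these vary in families I may take $s$ fixed for all sufficiently general $C$. On the curve $C$ this is a subbundle $F\subset E|_C$ with $\mu(F)\ge\mu(E|_C)$, so $\det F=\wedge^s F\hookrightarrow\wedge^s(E|_C)=(\wedge^s E)|_C$ is a line subsheaf of degree $\ge\tfrac sr\deg(E|_C)=\mu((\wedge^s E)|_C)$. Setting $W=\wedge^s E$ and $\rho=\rk W=\binom rs$: in characteristic zero $W$ is $\mu_H$-semistable, with $c_1(W)=\binom{r-1}{s-1}c_1(E)$ and (by a Chern-root computation) $\Delta(W)=\tfrac{r}{\binom rs}\binom{r-2}{s-1}\Delta(E)$. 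So it suffices to prove $2d\le\rho^2\Delta(W)+1$, since then $2d\le\binom rs^2\Delta(W)+1=r\binom rs\binom{r-2}{s-1}\Delta(E)+1$, and $\binom rs\binom{r-2}{s-1}$ is maximized over $1\le s\le r-1$ at $s=\lfloor r/2\rfloor$, where both factors peak.

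\textbf{Spreading out over a pencil.} To prove $2d\le\rho^2\Delta(W)+1$ I would pass to a general pencil $\mathbb P^1\subset|dH|$, whose base locus $Z$ consists of $d^2H^2$ reduced points; let $\sigma\colon Y\to X$ be the blow-up of $Z$, with exceptional divisors $E_i$ and fibration $f\colon Y\to\mathbb P^1$ whose general fiber is a smooth $C\in|dH|$, with $(\sigma^*E)|_C=E|_C$ and fiber class $[C]=\sigma^*(dH)-\sum_iE_i$. By hypothesis $\sigma^*E$ is not fiberwise stable over the generic point of $\mathbb P^1$, so the rank-$s$ destabilizing subsheaf of the generic fiber---the first step of the relative Harder--Narasimhan filtration---spreads out and saturates to a rank-$s$ subsheaf $\mathcal F\subset\sigma^*E$ restricting to $F$ on general fibers. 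Taking determinants, $\mathcal O_Y(D):=\det\mathcal F\hookrightarrow\wedge^s\sigma^*E=\sigma^*W$ with $D\cdot[C]=\deg F\ge\mu(W|_C)$, i.e. $\xi\cdot[C]\ge0$ for $\xi:=\rho D-\sigma^*c_1(W)$. Pushing $\mathcal F$ into $\sigma_*\sigma^*E=E$ and saturating gives a rank-$s$ subsheaf $\bar{\mathcal F}\subset E$; since $\det\bar{\mathcal F}$ and $\sigma^*\det\mathcal F$ agree away from the $E_i$, one finds $D=\sigma^*c_1(\bar{\mathcal F})+\sum_ia_iE_i$ with $a_i\ge0$, so $\xi=\sigma^*\eta+\rho\sum_ia_iE_i$ and $\xi^2=\eta^2-\rho^2\sum_ia_i^2$, where $\eta:=\rho\,c_1(\bar{\mathcal F})-c_1(W)$. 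Because $E$ is $\mu_H$-\emph{stable} and $\bar{\mathcal F}$ is a proper nonzero subsheaf, $c_1(\bar{\mathcal F})\cdot H<s\mu_H(E)=\mu_H(W)$, whence $\eta\cdot H\le-1$; combined with $\xi\cdot[C]\ge0$, i.e. $d(\eta\cdot H)+\rho\sum_ia_i\ge0$, this forces $\rho\sum_ia_i\ge d$.

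\textbf{Bogomolov's inequality and the Hodge index theorem.} The remaining task is to bound the defect $\sum_ia_i$ from above, and this is the main obstacle. Bogomolov's inequality gives $\Delta(\sigma^*W)=\Delta(W)\ge0$; the discriminant-additivity formula applied to $0\to\mathcal O_Y(D)\to\sigma^*W\to\mathcal Q\to0$ expresses $\xi^2$ in terms of $\Delta(W)$ and $\Delta(\mathcal Q)$, and---after bounding $\Delta(\mathcal Q)$ below via the Harder--Narasimhan filtration of $\mathcal Q$, the additivity formula, and the Hodge index theorem---yields $\xi^2\ge-c(\rho)\Delta(W)$ for an explicit $c(\rho)$. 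Together with $\xi^2=\eta^2-\rho^2\sum_ia_i^2$, the Hodge index estimate for $\eta$ on $X$, and the semistability bounds $\mu_{m\sigma^*H+[C]}(\mathcal O_Y(D))\le\mu_{m\sigma^*H+[C]}(\sigma^*W)$ for the ample classes $m\sigma^*H+[C]$ (which bound $\xi\cdot[C]$, hence $\sum_ia_i$, above in terms of $-\eta\cdot H$), an optimization should produce $2d\le\rho^2\Delta(W)+1$; the final $+1$ reflects the integrality of $\deg F$ on $C$---the difference between ``not stable'' and ``not semistable''. The subtlety is that $\mathcal Q$ need not be semistable, so Bogomolov's inequality is unavailable for it directly, and pinning down the \emph{sharp} constant $\rho^2$ from the bookkeeping over the exceptional divisors is where the real difficulty lies. (Alternatively one can avoid the blow-up by a minimal-$d$ argument: replace $W$ by its elementary modification along $C$ in the direction of $\det F$ and contradict minimality, $\Delta$ entering through the discriminant of the modification.)
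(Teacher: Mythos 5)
First, a point of context: the paper does not prove this statement---it is quoted from Bogomolov (see also Huybrechts--Lehn, Ch.~7) as background for the restriction problem---so there is no in-paper proof to compare against, and your proposal must be judged on its own terms. There it has a structural flaw. Bogomolov's theorem applies to an \emph{arbitrary} smooth curve $C\in|dH|$; the paper explicitly singles this out as the feature distinguishing it from Flenner's theorem. Your contrapositive, however, assumes $E|_C$ is unstable for a \emph{general} member of $|dH|$ and then spreads the destabilizing subsheaf out over a pencil. The negation of the conclusion only says that \emph{some particular} smooth curve has unstable restriction; nothing lets you propagate that to the generic member of a pencil, so the blow-up, the relative Harder--Narasimhan filtration, and everything downstream prove at best a Flenner/Mehta--Ramanathan-type statement about general curves. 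The classical proof works with the single given curve: one passes, as you do, to the line subbundle $\det F\subset(\wedge^sE)|_C$ with $\deg\det F\ge\mu\bigl((\wedge^sE)|_C\bigr)$, but then forms the elementary modification $W'=\ker\bigl(\wedge^sE\to i_*((\wedge^sE)|_C/\det F)\bigr)$ on $X$ itself, shows $W'$ is Bogomolov-unstable under the stated numerical hypothesis, and plays the positive-cone class $\xi_{A,W'}$ of a maximal destabilizing subsheaf $A$ against the $\mu_H$-semistability of $\wedge^sE$ and the Hodge index theorem. You mention essentially this route only as a parenthetical alternative in your last sentence; it needs to be the proof, not an aside.

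Second, even within your chosen framework the quantitative heart of the argument is missing. The reduction to $W=\wedge^sE$, the identity $\Delta(W)=\tfrac{r}{\binom rs}\binom{r-2}{s-1}\Delta(E)$, and the observation that $\binom rs\binom{r-2}{s-1}$ peaks at $s=\lfloor r/2\rfloor$ are all correct and do reproduce the constant in the statement. But the step that actually yields $2d\le\rho^2\Delta(W)+1$ is replaced by the assertion that ``an optimization should produce'' it, and you yourself name the two obstacles---$\mathcal Q$ need not be semistable, so Bogomolov's inequality does not apply to it directly, and the bookkeeping over the exceptional divisors must be sharp enough to give exactly $\rho^2$---without resolving either. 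As written, the proposal is a plausible outline of the weaker general-curve theorem with its decisive estimate left open.
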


Langer recently gave a very strong restriction theorem which holds for very ample divisors in higher dimensions and arbitrary characteristic \cite[Thm. 0.1]{langer2}. In the case of a surface in characteristic zero, Langer's theorem becomes:

\begin{thm*}[Langer]
Let $X$ be a smooth surface and $H$ a very ample divisor on $X$. If $E$ is a $\mu_H$-semistable bundle of rank $r \geq 2$ and $C \subset X$ a general curve of class $dH$, then $E|_C$ is semistable if
\[
\frac{(d+1)(d+2)}{2} > \left( \max \left\{\frac{r^2-1}{4},1\right\}H^2+1\right)r^2\Delta(E) + 1.
\]
\end{thm*}

More recent developments in stability conditions for derived categories have led to new results in the study of restrictions of bundles on surfaces. For example, Feyzbakhsh \cite{feyzbakhsh} used the machinery of stability conditions developed by Bridgeland \cite{bridgeland1} \cite{bridgeland2} to give an effective criterion to guarantee the stability of restrictions of bundles on K3 surfaces. We follow the method of Feyzbakhsh to give a restriction theorem for all surfaces. When the surface is the projective plane, we give a much stronger stability criterion.

A notable difference between our first result and Langer's theorem is that we are able to replace the requirement that $C$ be ample with a weaker condition on the positivity of $C$. An application of this improvement is suggested by Proposition \ref{prop:extending_vb} in which we construct a (rational) map between moduli spaces: given a smooth polarized surface $(X,H)$ and curve $C \subset X$, under the conditions of Theorem \ref{thm:restriction_allsurfaces}, the restriction map defines a morphism from the space of $\mu_H$-stable sheaves on $X$ to the space of stable sheaves on $C$. The precise statements of the main theorems are as follows:
\begin{thm*}[\ref{thm:restriction_allsurfaces}]
Suppose $(X,H)$ is a smooth polarized surface, $C$ an integral curve on $X$, and $E$ a $\mu_H$-stable vector bundle of rank $r \geq 2$ on $X$. Then $E|_C$ is stable if
\[
\frac{C^2}{2H \cdot C} > r(r-1)\Delta(E)+\frac{1}{2r(r-1)H^2}.
\]
\end{thm*}

\begin{remark}
During the preparation of this paper, S. Feyzbaksh pointed out that the proof of Theorem \ref{thm:restriction_allsurfaces} relies only on the notion of \emph{tilt stability} and a direct generalization therefore holds when restricting stable bundles to divisors on higher dimensional varieties. A proof of this fact in the case the divisor moves in the same ample class as the polarization can be found in Feyzbakhsh's thesis \cite{feyzbakhsh-thesis}. The author wants to thank S. Feyzbakhsh for her thoughtful conversations.
\end{remark}

In the special case that $X$ is the plane and $E$ is general in moduli, we obtain a much stronger bound for the degree of $C$ that does not grow with the rank of $E$.

\begin{thm*}[\ref{thm:restriction_p2}]
Suppose $C \subset \P^2$ is a degree $d$ integral curve and $E$ is a general slope stable vector bundle on $\P^2$ with Chern character $\vec{v}$ such that $\Delta(\vec{v}) \geq 4$. Then $E|_C$ is stable if
\[
d > \sqrt{5+8\Delta(E)} + 5.
\]
\end{thm*}

Following Feyzbakhsh \cite{feyzbakhsh}, we prove both theorems using stability conditions for surfaces as constructed by Bridgeland \cite{bridgeland2}, Arcara-Bertram \cite{arcara-bertram}, and Toda \cite{toda}. If $E$ is a slope stable sheaf on a surface $X$ and $i:C\hookrightarrow X$ a  curve, then $E$ and $i_\ast E|_C$ fit into a distinguished triangle
\[
E \to i_\ast E|_C \to E(-C)[1] \to E[1]
\]
in the derived category $\mathcal{D}^b(X)$, where $E(-C)[1]$ denotes the complex associated to $E(-C)$ with the grading shifted by one. If $E$ and $E(-C)[1]$ are stable of the same phase, then $i_\ast E|_C$ is semistable. We will show that in these circumstances $E|_C$ is a slope semistable sheaf on $C$ (Lemma \ref{lemma:bridgelandstable_implies_slopestable}). By slightly perturbing the stability condition, we will show that $E|_C$ is in fact stable (Lemma \ref{lem:stableimplies_stableres}).

The main content of the argument is producing sufficient criteria to ensure that the conditions of Lemmas \ref{lemma:bridgelandstable_implies_slopestable} and \ref{lem:stableimplies_stableres} hold. Specifically, we need $E$ and $E(-C)[1]$ to be stable of the same phase. For a fixed Chern character $\vec{v}$, there is a wall-and-chamber decomposition of the \emph{stability manifold} parametrizing stability conditions such that stable objects with Chern character $\vec{v}$ can only be destabilized when a wall is crossed \cite{bridgeland1}. To prove Theorem \ref{thm:restriction_allsurfaces}, we estimate the \emph{Gieseker wall}, a wall in the stability manifold bounding the chamber consisting of stability conditions $\sigma$ for which the only $\sigma$-semistable objects of Chern character $\vec{v}$ are the Gieseker semistable sheaves. Similarly, to prove Theorem \ref{thm:restriction_p2} we estimate the \emph{effective wall} for sheaves on $\P^2$. The effective wall bounds the chamber in which the \emph{general} Gieseker semistable sheaf $E$ is $\sigma$-semistable. The Giesker wall was computed explicitly in \cite{ch-nef} and the effective wall for sheaves on $\P^2$ was computed in \cite{chw}. We expect the techniques used in proving Theorem \ref{thm:restriction_p2} to generalize to any surface for which the effective wall is known.

\subsection*{Structure of this paper} In Section \ref{sec:preliminaries} we recall necessary facts about stable sheaves and stability conditions for surfaces. In Section \ref{sec:all_surfaces} we prove our restriction theorems. We conclude in Section \ref{sec:cohomology} by giving some applications of restriction theorems to Brill-Noether problems for curves in the plane and on Hirzebruch surfaces.

\subsection*{Acknowledgments} The author is grateful to Izzet Coskun, Takumi Murayama, Tim Ryan, and Matthew Woolf for their helpful feedback and input. The author is also grateful to the anonymous referees for their thoughtful responses.

\section{Preliminaries}\label{sec:preliminaries}
In this section we will review facts about stable sheaves and stability conditions that will be used throughout the paper. All schemes are defined over the field of complex numbers.

\subsection{Moduli of stable sheaves} For more details on moduli spaces of sheaves, we refer the reader to \cite{huybrechts-lehn}. Let $(X,H)$ be a polarized variety and $E$ a coherent sheaf on $X$ of pure dimension $d=\dim(X)$. Then we may write
\[
P(E,m) = \chi(E\otimes\O_X(m)) = \sum_{i=0}^{d} \alpha_i(E) \frac{m^i}{i!}
\]
for unique integers $\alpha_i(E)$. The \emph{degree} $\deg(E)$ is defined as the number $\alpha_{d-1}(E) - r\cdot\alpha_{d-1}(\O_X)$. If $X$ is smooth or an integral curve, then $\deg(E) = c_1(E) \cdot H^{d-1}$ by Hirzebruch-Riemann-Roch \cite[Ex. IV.1.9]{hartshorne}.

The \emph{reduced Hilbert polynomial} of $E$ is defined as
\[
p(E,m) = \frac{p(E,m)}{\alpha_d(E)}.
\]
We say that $E$ is \emph{Gieseker semistable} if for all proper subsheaves $F \subset E$ we have $p(F,m) \leq p(E,m)$ for $m \gg 0$. We say $E$ is \emph{Gieseker stable} if strict inequality holds.

If $E$ has rank $r > 0$, then we define the $\mu_H$-\emph{slope} of $E$ to be the number
\[
\mu_H(E) = \frac{\deg(E)}{rH^d}.
\]
We say that a torsion-free sheaf $E$ is $\mu_H$-\emph{semistable} (or \emph{slope semistable}) if $\mu_H(F) \leq \mu_H(E)$ for all proper subsheaves $F$ of rank strictly less than $r$. The sheaf $E$ is said to be $\mu_H$-\emph{stable} (or \emph{slope stable}) if strict inequality holds. Note that the notions of slope stability and Gieseker stability both depend on the choice of ample divisor $H$.


Write $c(E) = 1 + c_1(E) + c_2(E) + \cdots c_d(E) = \prod_{i=1}^d (1+\gamma_i(E))$, with $\gamma_i(E) \in A^1(X)$ for all $i$. Then the \emph{Chern character} of $E$ is the polynomial
\[
\ch(E) = \sum_{i=1}^d \exp(\gamma_i) = \rk(E) + c_1(E) + \frac{1}{2}(c_1(E)^2 - 2c_2(E)) +\cdots.
\]
The degree $n$ term of $\ch(E)$ is denoted $\ch_n(E)$. We extend the definition of the Chern character (and consequently that of the slope) to the derived category $\mathcal{D}^b(X)$ via
\[
\ch(\cdots \rightarrow E^{-1} \to E^0 \to E^1 \to \cdots) = \sum_{i=-\infty}^\infty (-1)^i \ch E^{i}.
\]
Note that there are finitely many terms in this sum because we are working in the bounded derived category. Let $E$ be an object in the derived category of $X$, not necessarily a sheaf. Write $\ch(E) = (\ch_0,\ch_1,\ch_2,\dots, \ch_d)$. We will refer to $\ch_0$ as the \emph{rank of $E$} though this number may be negative. If $X$ is a surface, then define the \emph{discriminant} of $E$ to be the number
\[
\Delta(E) = \frac{1}{2} \cdot \frac{\ch_1^2}{\ch_0^2} - \frac{\ch_2}{\ch_0}.
\]
When working with Bridgeland stability conditions it is convenient to make a slight change of coordinates. For any $\mathbb{Q}$-divisor $D$, define the \emph{twisted Chern character} $\ch^D = \exp(-D)\ch$. Explicitly,
\[
\ch^D_0 = \ch_0, \qquad \ch^D_1 = \ch_1 - D \ch_0, \qquad \ch^D_2 = \ch_2 - D \ch_1 + \frac{D^2}{2} \ch_0.
\]

We then define the \emph{twisted slope} and \emph{twisted discriminant}, respectively:
\[
\mu_{H,D}(E) =  \begin{cases} \displaystyle
      \frac{H \cdot \ch_1^D}{H^2\ch_0^D} & \ch_0^D(E) > 0 \\
      \infty & \ch_0^D(E) = 0 
   \end{cases}
, \qquad \Delta_{H,D}(E) = \frac{1}{2}\mu_{H,D}(E)^2 - \frac{\ch_2^D}{H^2\ch_0^D}.
\]
Note that $\mu_{H,D}(E)$ differs from $\mu_H(E) = \mu_{H,0}(E)$ by a constant, and so $E$ is a $\mu_H$-stable (resp. semistable) sheaf if and only if $\mu_{H,D}(F) < \mu_{H,D}(E)$ (resp. $\mu_{H,D}(F) \leq \mu_{H,D}(E)$) for all $D$ and all proper subsheaves $F\subset E$ with rank less than $\ch_0(E)$.

Finally, define the \emph{reduced twisted Hilbert polynomial} of a positive rank sheaf $E$ by
\[
p^{E}_{H,D}(m) = \frac{\chi(E \otimes \O_X(mH - D))}{\ch_0^D(E)},
\]
where the Euler characteristic is defined formally by Riemann-Roch. We say $E$ is \emph{$(H,D)$-twisted Gieseker semistable} if for every nonzero proper subsheaf $F \subset E$, $p^F_{H,D}(m) \leq p^E_{H,D}(m)$ for $m \gg 0$. We say $E$ is \emph{$(H,D)$-twisted Gieseker stable} if strict inequality holds.

\begin{lemma}\label{lemma:change_of_deltas}
Let $E$ be a vector bundle. Then $H^2\Delta_{H,D}(E) \geq \Delta(E)$, and there exists a $\mathbb{Q}$-divisor $D$ such that $H^2\Delta_{H,D}(E) = \Delta(E)$.
\end{lemma}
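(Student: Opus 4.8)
The plan is to compute $H^2\Delta_{H,D}(E) - \Delta(E)$ directly as a function of the $\mathbb{Q}$-divisor $D$, show this function is a nonnegative quadratic form in $D$, and then exhibit its minimizer. First I would expand $\Delta_{H,D}(E)$ using the definitions of the twisted Chern characters $\ch_0^D, \ch_1^D, \ch_2^D$. Since $\ch_0^D = \ch_0 = r$, the twisted slope is $\mu_{H,D}(E) = \mu_H(E) - \frac{H \cdot D}{H^2}$, and substituting into $\Delta_{H,D}(E) = \tfrac12 \mu_{H,D}(E)^2 - \frac{\ch_2^D(E)}{H^2 r}$ with $\ch_2^D = \ch_2 - D\ch_1 + \tfrac{D^2}{2}r$, the linear-in-$D$ terms should organize so that
\[
H^2 \Delta_{H,D}(E) - \Delta(E) = \frac{1}{2r}\Bigl( r\, D^2 - 2 D\cdot c_1(E) + \frac{(H\cdot c_1(E))^2 - 2(H\cdot D)(H\cdot c_1(E))}{H^2} \cdot(-1) \cdot(-1)\Bigr)
\]
— more cleanly, after collecting terms I expect to land on an expression of the form $\tfrac{1}{2r}\bigl(r\,D - c_1(E)\bigr)^2$ corrected by a term involving only $H$-intersections, i.e. something like
\[
H^2\Delta_{H,D}(E) - \Delta(E) = \frac{1}{2r H^2}\Bigl( H^2 D - (H\cdot D) H - \text{(projection of } c_1(E)\text{)}\Bigr)^2 \cdot(\text{sign}),
\]
the point being that the quadratic part is controlled by the Hodge index theorem.

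The key structural input is the Hodge index theorem: on a smooth projective surface, the intersection form is negative definite on the orthogonal complement $H^\perp$ of an ample class $H$ in $\mathrm{NS}(X)_{\mathbb{Q}}$. So I would decompose $c_1(E) = aH + \beta$ and $D = bH + \delta$ with $\beta, \delta \in H^\perp$ (working over $\mathbb{Q}$, so $a = \frac{H\cdot c_1(E)}{H^2} \in \mathbb{Q}$, etc.), plug in, and watch the $aH, bH$ components contribute terms that cancel against pieces of $\Delta(E)$, leaving a residual of the form $\frac{1}{2r}(-(\beta - b' \cdot(\text{something}))^2)$... Actually the cleanest route: show $2r\bigl(H^2\Delta_{H,D}(E) - \Delta(E)\bigr) = -\,\bigl(r\delta - \beta\bigr)^2$ where $\delta$ is the $H^\perp$-component of $D$ and $\beta$ the $H^\perp$-component of $c_1(E)$ — wait, signs: since the form is negative definite on $H^\perp$, $-(r\delta-\beta)^2 \geq 0$, giving the inequality $H^2\Delta_{H,D} \geq \Delta$. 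Equality holds iff $r\delta = \beta$, i.e. iff the $H^\perp$-component of $D$ equals $\tfrac1r$ times the $H^\perp$-component of $c_1(E)$; choosing $D = \tfrac1r \beta$ (or any $\mathbb{Q}$-divisor with that $H^\perp$-part — note this $D$ is a genuine $\mathbb{Q}$-divisor since $\beta \in \mathrm{NS}(X)_{\mathbb{Q}}$) achieves equality. One should double-check that only the $H^\perp$-components matter: the $H$-components $b$ of $D$ drop out of $H^2\Delta_{H,D}(E) - \Delta(E)$ entirely, since twisting by a multiple of $H$ shifts the slope but not the discriminant in a compensating way — this is essentially the remark in the text that $\mu_{H,D}$ differs from $\mu_H$ by a constant.

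The main obstacle is purely bookkeeping: carefully expanding $\ch_2^D$ and $\mu_{H,D}^2$ and verifying that all terms not of the form "(negative definite quadratic in the $H^\perp$-part of $r D - c_1(E)$)" cancel. There is no conceptual difficulty beyond invoking Hodge index; the risk is sign errors and mishandling the $D^2/2 \cdot \ch_0$ term in $\ch_2^D$. I would therefore set up the decomposition $rD - c_1(E) = (\text{multiple of }H) + \xi$ with $\xi \in H^\perp$ at the very start, so that the final identity reads $2rH^2\bigl(H^2\Delta_{H,D}(E) - \Delta(E)\bigr) = -H^2 \xi^2 \geq 0$, with equality precisely when $\xi = 0$, which is attained by taking $D = \tfrac1r c_1(E)$ (whose twist makes $\ch_1^D = c_1(E) - \tfrac1r c_1(E)\cdot r = 0$, so $\mu_{H,D}(E) = 0$ and $\Delta_{H,D}(E) = -\ch_2^D(E)/(rH^2)$, the simplest possible normalization).
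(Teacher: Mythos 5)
Your proposal is correct and follows essentially the same route as the paper: decompose $c_1(E)$ and $D$ into their $H$-components and $H$-orthogonal components, observe that the difference $H^2\Delta_{H,D}(E)-\Delta(E)$ reduces to a negative multiple of $(r\delta-\varepsilon)^2$ with $\delta,\varepsilon\in H^\perp$, and invoke the Hodge index theorem, with equality exactly when $\delta=\varepsilon/r$ (e.g.\ $D=\tfrac1r c_1(E)$). The only discrepancy is the normalizing constant --- the correct identity is $H^2\Delta_{H,D}(E)-\Delta(E)=\tfrac{-1}{2r^2}(r\delta-\varepsilon)^2$, not $\tfrac{-1}{2r}(\cdots)^2$ --- which does not affect the sign or the equality condition.
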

\begin{proof}
Write $\ch_1(E) = eH + \varepsilon$ and $D = dH + \delta$, where $H \cdot \varepsilon = H \cdot \delta = 0$. Then direct computation shows:
\[
H^2\Delta_{H,D}(E) - \Delta(E) = \frac{-1}{2\ch_0(E)^2}(\ch_0(E)\delta - \varepsilon)^2.
\]
This quantity is nonnegative by the Hodge index theorem and equals zero precisely when $\delta = \varepsilon/\ch_0(E)$.
\end{proof}

Note that we can take $d=0$ in Lemma \ref{lemma:change_of_deltas} to obtain a divisor $D$ such that $H^2\Delta_{H,D}(E) = \Delta(E)$ and $H \cdot D = 0$.

A famous theorem of Bogomolov says that if $X$ is a smooth projective surface and $E$ is slope semistable, then $\Delta(E) \geq 0$ \cite{bogomolov}. By the above lemma, we also have $\Delta_{H,D}(E) \geq 0$.

For a given Chern character $\vec{v}$, Matsuki-Wentworth \cite{matsuki-wentworth} showed that there are projective moduli spaces parametrizing S-equivalence classes of $(H,D)$-twisted semistable torsion-free sheaves with Chern character $\vec{v}$. We will denote this space by $M_{X,(H,D)}(\vec{v})$. When $X = \P^2$ we will always choose $H$ to be the class of a line. In fact, if the Picard rank of $X$ is one, then $M_{X,(H,D)}(\vec{v})$ does not depend on $(H,D)$. We will supress the subscript $X,(H,D)$ whenever it is clear from context.

In order to compare sheaves on the surface $X$ to their restrictions to a curve $C \subset X$, we will need to know the Chern character of pushforwards of sheaves on $C$. The following standard lemma can be immediately verified in the case that $C$ is smooth by using the Grothendieck-Riemann-Roch formula.

\begin{lemma}\label{lemma:ch_of_pushforward}
Let $C$ be an integral curve on a smooth surface $X$, and let $F$ be a sheaf of rank $r$ on $C$. Then
\[
\ch(i_\ast F) = \left(0, rC, \deg(F) - \frac{rC^2}{2}\right).
\]
\end{lemma}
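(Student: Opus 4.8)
The plan is to reduce the statement to two elementary computations using $K$-theory. Since $i\colon C\hookrightarrow X$ is a closed immersion the functor $i_\ast$ is exact, and since the Chern character is additive on short exact sequences, the assignment $[\mathcal{G}]\mapsto \ch(i_\ast\mathcal{G})$ descends to a homomorphism $K_0(C)\to A^\ast(X)$. By dévissage, every coherent sheaf on the one-dimensional scheme $C$ admits a finite filtration whose quotients are of the form $\mathcal{O}_Z$ for $Z\subset C$ integral and closed, so $K_0(C)$ is generated by $[\mathcal{O}_C]$ together with the classes $[\mathcal{O}_p]$ of the reduced closed points. Hence we may write $[F]=a[\mathcal{O}_C]+\sum_j n_j[\mathcal{O}_{p_j}]$ in $K_0(C)$. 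Applying the rank homomorphism $K_0(C)\to\Z$ given by localization at the generic point of $C$ (which sends $[\mathcal{O}_C]\mapsto 1$ and each $[\mathcal{O}_p]\mapsto 0$) forces $a=r$, and applying the degree $\deg(\mathcal{G})=\chi(\mathcal{G})-\rk(\mathcal{G})\chi(\mathcal{O}_C)$ (additive, vanishing on $[\mathcal{O}_C]$, equal to $1$ on each $[\mathcal{O}_p]$) gives $\deg F=\sum_j n_j$.

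It therefore suffices to compute $\ch(i_\ast\mathcal{O}_C)$ and $\ch(i_\ast\mathcal{O}_p)$. For a closed point $p$, the sheaf $i_\ast\mathcal{O}_p$ is a length-one skyscraper on $X$, so $\ch(i_\ast\mathcal{O}_p)=(0,0,1)$. For $\mathcal{O}_C$ itself, since the smooth surface $X$ is locally factorial the integral curve $C$ is an effective Cartier divisor, and we have the structure sequence $0\to\mathcal{O}_X(-C)\to\mathcal{O}_X\to i_\ast\mathcal{O}_C\to 0$; taking Chern characters yields $\ch(i_\ast\mathcal{O}_C)=\ch(\mathcal{O}_X)-\ch(\mathcal{O}_X(-C))=(1,0,0)-(1,-C,\tfrac{C^2}{2})=(0,C,-\tfrac{C^2}{2})$. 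Combining, $\ch(i_\ast F)=r\cdot(0,C,-\tfrac{C^2}{2})+\big(\textstyle\sum_j n_j\big)(0,0,1)=(0,rC,\deg F-\tfrac{rC^2}{2})$, which is the claim.

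The only genuine subtlety is that $C$ may be singular, which is precisely why one cannot simply invoke Grothendieck--Riemann--Roch on $C$; the dévissage step and the structure sequence on the ambient \emph{smooth} surface are what replace it. I expect the part requiring the most care to be the bookkeeping in the first paragraph: checking that rank and degree are well-defined additive functions on $K_0(C)$ taking the asserted values on the generators, so that the coefficient of $[\mathcal{O}_C]$ is indeed $r$ and $\sum_j n_j$ is indeed $\deg F$. Once this is in place, the two base-case computations are immediate, and no further hypotheses on $F$ (torsion-freeness, local freeness) are needed.
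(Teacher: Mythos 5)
Your proof is correct, and it takes a genuinely different (and more complete) route than the paper, which offers no argument at all beyond the remark that the formula "can be immediately verified in the case that $C$ is smooth by using the Grothendieck--Riemann--Roch formula." Your d\'evissage argument is exactly the right way to handle the singular case, which the paper actually needs since its restriction theorems allow arbitrary integral curves: you reduce to $\ch(i_\ast\mathcal{O}_C)$, computed from the Cartier-divisor structure sequence on the smooth ambient surface, and $\ch(i_\ast\mathcal{O}_p)=(0,0,1)$, and you correctly match the paper's definition $\deg(F)=\chi(F)-r\chi(\mathcal{O}_C)$ so that the coefficients come out right. The one imprecision is in the step you yourself flagged: it is not literally true that every coherent sheaf on $C$ has a filtration with subquotients isomorphic to $\mathcal{O}_Z$ for integral closed $Z$ (e.g.\ a line bundle with no sections on either itself or its dual admits no such filtration); the correct statement, which is all you use, is that the classes $[\mathcal{O}_C]$ and $[\mathcal{O}_p]$ generate the Grothendieck group of coherent sheaves on $C$, proved by Noetherian induction using a map $\mathcal{O}_C^{\oplus r}\to F$ (or $F\to$ a torsion-free quotient) that is an isomorphism at the generic point, with kernel and cokernel supported in dimension zero. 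With that substitution your argument is complete, and it buys a uniform treatment of singular integral curves that the GRR-on-$C$ approach sketched in the paper does not directly give.
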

\begin{proof}
First we assume $F = \O_C$. We then show the claim when $F$ is any rank one sheaf, then we induct on the rank $r$ of $F$. If $F = \O_C$, then we have the exact sequence
\[
0 \to \O_X(-C) \to \O_X \to i_\ast \O_C \to 0,
\]
where $i:\hookrightarrow X$ is the inclusion. By additivity, $\ch(i_\ast \O_C) = \ch(\O_X) - \ch(\O_X(-C)) = (0,C,-C^2/2)$ as desired. Suppose now the claim holds for $\O(D)$ and $p \in C$ is a smooth point. We have the exact sequence
\[
0 \to \O_C(D-p) \to \O_C(D) \to \C_p \to 0.
\]
After pushing forward, we have
\[
\ch(i_\ast\O_C(D-p)) = \ch(i_\ast \O_C(D)) - \ch(i_\ast \C_p) = (0,C,\deg(D-p) - C^2/2).
\]
A similar argument shows that the claim holds for $\O_C(D+p)$. Since any line bundle can be obtained from $\O_C$ by adding and subtracting smooth points, the claim must hold for any line bundle.

Now let $F$ be any rank one sheaf. If $L$ is an ample line bundle, then $F\otimes L^{\otimes m}$ is globally generated for $m \gg 0$. In particular, $F \otimes L^{\otimes m}$ has a global section $s$. The map $\O_C \to F\otimes L^{\otimes m}$ defined by multiplication by $s$ is injective. Untwisting, we have an injection $(L^\vee)^{\otimes m} \to F$, and therefore an exact sequence
\[
0 \to M \to F \to Q \to 0
\]
where $M$ is a line bundle and $Q$ is supported in dimension zero. Pushing forward, $\ch (i_\ast F) = \ch(i_\ast M) + \ch(i_\ast Q)$, and $\ch (i_\ast Q) = (0,0,\chi(Q))$. All line bundles $M$ have been shown to satisfy the claim, thus we have $\ch (i_\ast F) = (0,C,\deg(M) + \chi(Q) - C^2/2)$.

We show that $\deg(F) = \deg(M) + \chi(Q)$. The Hilbert polynomial $P(F,m)$ of $F$ is given by $P(F,m) = \alpha_1(F) m + \alpha_0(F)$ for some coefficients $\alpha_1, \alpha_0$. Similarly, $P(M,m) = \alpha_1(M)m + \alpha_0(M)$, $P(Q,m) = \alpha_0(Q)$. By the additivity of $P$, $\alpha_0(F) = \alpha_0(M) + \alpha_0(Q)$ and $\alpha_1(F) = \alpha_1(M)$. By definition, $\deg(F) = \alpha_0(F) - \alpha_0(\O_C) = \deg(M) + \alpha_0(Q)$. Since $Q$ is supported at a point, $\alpha_0 (Q) = \chi(Q)$. Thus the claim holds for any rank one sheaf $F$.

Finally, we induct on $r$. If $F$ is any sheaf of rank $r>1$, then $F$ fits into an exact sequence
\[
0 \to M \to F \to Q \to 0
\]
where $M$ is a line bundle and $Q$ has rank $r-1$. By the induction hypothesis and the additivity of $\ch$, the claim holds for $F$.
\end{proof}

\subsection{Stability conditions on surfaces}

\begin{definition}{\cite{bridgeland1}}
Let $\mathcal{D}^b(X)$ denote the bounded derived category of coherent sheaves on a projective variety $X$. A \emph{stability condition} $\sigma$ on $\mathcal{D}^b(X)$ is a pair $\sigma = (Z, \mathcal{A})$, where $Z:K_0(X) \to \C$ is a group homomorphism and $\mathcal{A}$ is the heart of a bounded $t$-structure on $\mathcal{D}^b(X)$ satisfying three properties:
\begin{enumerate}
	\item (Positivity) $Z$ maps nonzero objects in $\mathcal{A}$ to the extended upper half-plane $\mathbb{H} = \{Re^{i\theta} : \theta\in(0,\pi], R>0\}$.
	\item (Harder-Narasimhan filtrations) For an object $E$ of $\mathcal{A}$, define the $\sigma$-slope of $E$ as
	\[
	\mu_\sigma(E) = -\frac{\Re(Z(E))}{\Im(Z(E))}.
	\]
	We call $E$ \emph{$\sigma$-stable} (resp. \emph{semistable}) if for every proper subobject $F$ of $E$ we have $\mu_\sigma(F) < \mu_\sigma(E)$ (resp. $\leq$). The pair $(\mathcal{A},Z)$ must satisfy the \emph{Harder-Narasimhan property}: for every object $E$ of $\mathcal{A}$, there is a finite filtration
	\[
	0 = E_0 \subset E_1 \subset \cdots \subset E_n = E,
	\]
	such that $E_i/E_{i-1}$ is $\mu_\sigma$-semistable, and $\mu_\sigma(E_i/E_{i-1}) > \mu_\sigma(E_{i+1}/E_i)$ for all $i$.
	\item (Support property) Fix a norm $\lVert \cdot \rVert$ on $K_{\text{num}}(X) \otimes \R$. Then there must exist a constant $C > 0$ such that
	\[
	\lVert E \rVert \leq C \lVert Z(E) \rVert
	\]
	for all semistable objects $E$ in $\mathcal{A}$.
\end{enumerate}
\end{definition}

In the case when $X$ is a smooth surface, Bridgeland \cite{bridgeland2}, Arcara-Bertram \cite{arcara-bertram}, and Toda \cite{toda} explicitly constructed stability conditions. Let $H$ be an ample divisor on $X$ and $D$ any $\mathbb{Q}$-divisor. For $s \in \R$, define the following subcategories of $\mathrm{Coh}(X)$:
\begin{align*}
&\mathcal{Q}_s = \{Q \in \mathrm{Coh}(X): Q \text{ is torsion or } \mu_{H,D}(Q') > s \text{ for all quotients }Q'\text{ of }Q\}\\
&\mathcal{F}_s = \{F \in \mathrm{Coh}(X) : F \text{ is torsion-free and } \mu_{H,D}(F') \leq s \text{ for all subsheaves }F' \text{ of }F \}.
\end{align*}
We then define the full subcategory $\mathcal{A}_s$ of $\mathcal{D}^b(X)$ as
\[
\mathcal{A}_s = \{F^\bullet \in \mathcal{D}^b(X) : \mathcal{H}^{-1}(F^\bullet) \in \mathcal{F}_s, \mathcal{H}^0(F^\bullet) \in \mathcal{Q}_s, \mathcal{H}^i(F^\bullet) =0 \text{ for } i \neq -1,0\}.
\]
Next, for $E$ an object in $\mathcal{D}^b(X)$, $s,t\in \R$, define.
\[
Z_{s,t}(E) 		= -\ch_2^{D+sH}(E) + \frac{t^2H^2}{2} \ch_0^{D + sH}(E) + iH \cdot \ch_1^{D+sH}(E).
\]
Then the pair $(Z_{s,t},\mathcal{A}_s)$ defines a stability condition on $\mathcal{D}^b(X)$ when $t > 0$ \cite{arcara-bertram}, and if $\ch_0(E) > 0$, we have:
\[
\mu_{\sigma}(E)	= \frac{(\mu_{H,D}(E)-s)^2 - t^2 - 2\Delta_{H,D}(E)}{\mu_{H,D}(E) - s}.
\]

If $E$ is a $\mu_{H,D}$-stable torsion-free sheaf on $X$ and $s < \mu_{H,D}(E)$, then $E$ is in the category $\mathcal{A}_s$. Similarly, the object $E(-C)[-1]$ is given by the complex $\cdots \to 0 \to E(-C) \to 0 \to \cdots$, concentrated in degree $-1$, so $E(-C)[1]$ is in $\mathcal{A}_s$ if and only if $E(-C)$ is in $\mathcal{F}_s$. Equivalently, we must have $s\geq \mu_{H,D}(E(-C)) = \mu_{H,D}(E) - C \cdot H/H^2$.

\begin{notation}
If $(X,H)$ is a smooth polarized surface and $\sigma=(Z_{s,t}, \mathcal{A}_s)$ is a stability condition, then we write the $\sigma$-slope as $\mu_{s,t} = \mu_\sigma$. We identify the family of stability conditions of the form $(Z_{s,t}, \mathcal{A}_s)$ with the half-plane $\{(s,t): s,t\in \R,\, t>0\}$ called the \emph{$(H,D)$-slice}. 
\end{notation}

An important feature of stability conditions is that for a fixed Chern character there is a wall-and-chamber decomposition of the space of stability conditions. A \emph{virtual wall} in the $(H,D)$-slice is a set of points of the form
\[
W(E,F) = \{(s,t) : \mu_{s,t}(E) = \mu_{s,t}(F)\}
\]
If $E$ is an object of $\mathcal{D}^b(X)$ such that $E$ is stable for $\sigma = (Z_{s,t},\mathcal{A}_s)$ but not for $\sigma' = (Z_{s',t'},\mathcal{A}_{s'})$, then there is a wall $W(E,F)$ separating the points $(s,t)$ and $(s',t')$ such that $E$ is stable for nearby points on one side of the wall but not for points on the other side. We call such walls \emph{actual walls}. The actual walls in the $(H,D)$-slice are nested semicircles with bounded centers and radii \cite{abch} (see \cite{maciocia} for the general case).

If $E$ and $F$ are objects of $\mathcal{D}^b(X)$ of nonzero rank, then the wall $W(E,F)$ has center $s_0$ and radius $\rho_0$ given by:
\[
s_0 = \frac{1}{2}(\mu_{H,D}(E) + \mu_{H,D}(F)) - \frac{\Delta_{H,D}(E) - \Delta_{H,D}(F)}{\mu_{H,D}(E) - \mu_{H,D}(F)} \qquad \rho_0^2 = (\mu_{H,D}(E) - s)^2 - 2\Delta_{H,D}(E).
\]

The next lemma is standard.

\begin{lemma}\label{lemma:destab_subsheaf}
Let $(X,H)$ be a smooth polarized surface and $E$ a coherent sheaf.
\begin{enumerate}
	\item If $A \to E$ is a destabilizing subobject of $E$ in $\mathcal{A}_s$, then $A$ is a sheaf.
	\item If $E[1] \to Q$ is a destabilizing quotient object of $E[1]$ in $\mathcal{A}_s$, then $Q=Q'[1]$ for some sheaf $Q'$.
\end{enumerate}
\begin{proof}
Both statements easily follow from taking the long exact sequence in cohomology.
\end{proof}
\end{lemma}

The method of \cite{feyzbakhsh} is to consider the exact sequence $0 \to E(-C) \to E \to i_\ast E|_C \to 0$ where $i:C \hookrightarrow X$ is an integral curve. There is a corresponding distinguished triangle in the derived category:
\[
E \to i_\ast E|_C \to E(-C)[1] \to E[1].
\]
If $E$ and $E(-C)[1]$ are $\sigma$-stable of the same phase, then $i_\ast E|_C$ is $\sigma$-semistable. The next lemma says that this is sufficient for $E|_C$ to be slope semistable.

\begin{lemma}\label{lemma:bridgelandstable_implies_slopestable}
Let $(X,H)$ be a smooth polarized surface, $i:C \hookrightarrow X$ an integral curve, and let $F$ be a torsion-free sheaf on $C$. Fix a stability condition $\sigma = (Z_{s,t},\mathcal{A}_s)$. If $i_\ast F$ is $\sigma$-(semi)stable, then $F$ is slope (semi)stable.
\end{lemma}
\begin{proof}
Applying Lemma \ref{lemma:ch_of_pushforward}, we obtain:
\[
\mu_{s,t}(i_\ast F) = \frac{\ch_2(i_\ast F)-(D+sH)\cdot \ch_1(i_\ast F)}{H \cdot \ch_1(i_\ast F)}.
\]
Let $r$ denote the rank of $F$ and let $F' \subset F$ be a subsheaf of rank $r'$. By Lemma \ref{lemma:ch_of_pushforward}, we have:
\[
\frac{\deg(F') - \frac{r'C^2}{2}-(D+sH)\cdot r'C}{H \cdot r'C} \leq \frac{\deg(F) - \frac{rC^2}{2}-(D+sH)\cdot rC}{H \cdot rC},
\]
if $F$ s $\sigma$-semistable, and strict inequality if $F$ is $\sigma$-stable. The result follows immediately because every torsion sheaf is an object in $\mathcal{A}_s$.
\end{proof}

The following important consequence appears in \cite{feyzbakhsh}.

\begin{cor}\label{lem:stableimplies_stableres}
Let $(X,H)$ be a smooth, polarized surface, $i:C \hookrightarrow X$ an integral curve, and $E$ an $(H,D)$-twisted stable sheaf on $X$. Let $\sigma = (Z_{s,t},\mathcal{A}_s) \in W(E,E(-C)[1])$ be a stability condition such that $E$ and $E(-C)[1]$ are both $\sigma$-stable. Then $E|_C$ is slope stable.
\end{cor}
\begin{proof}
Since $\sigma$ lies on the wall $W(E,E(-C)[1])$, we have that $\mu_{s,t}(i_\ast E|_C) = \mu_{s,t}(E) = \mu_{s,t}(E(-C)[1])$. By Lemma \ref{lemma:bridgelandstable_implies_slopestable}, we see immediately that $E|_C$ is slope semistable. By \cite[Prop. 16]{martinez}, the condition $\sigma$ may be perturbed to some $\sigma'$ so that $E$, $E(-C)[1]$, and $i_\ast E|_C$ are all $\sigma'$-stable. The result now follows from Lemma \ref{lemma:bridgelandstable_implies_slopestable}.
\end{proof}

\section{Stable restrictions on smooth surfaces}\label{sec:all_surfaces}
Let $X$ be a smooth surface and $H$ an ample divisor on $X$. Suppose $E$ is a $\mu_{H,D}$-stable sheaf on $X$ and $C \subset X$ an integral curve. The goal of this section is to give sufficient criteria for the restriction $E|_C$ to be stable on $C$. To prove Theorem \ref{thm:restriction_allsurfaces}, we would like to apply Lemma \ref{lem:stableimplies_stableres}. To do so, note that $E$ and $E|_C$ fit into an exact sequence, $0 \to E(-C) \to E \to i_\ast E|_C \to 0$, and there is correspondingly a distinguished triangle in the derived category $\mathcal{D}^b(X)$:
\[
E \to i_\ast E|_C \to E(-C)[1] \to E[1].
\]
If we can find a stability condition $\sigma=(Z_{s,t},\mathcal{A}_s)$ such that $E$ and $E(-C)[1]$ are $\sigma$-stable of the same slope, it will follow that $E|_C$ is slope stable. The set of $s,t$ such that $E$ and $E(-C)[1]$ have the same slope is given by a semicircular wall in the $(H,D)$-slice with center
\[
s = \frac{C \cdot \ch_1^D(E)}{\ch_0(E) H\cdot C} - \frac{C^2}{2 H \cdot C}.
\]

For any stable Chern character $\vec{v}$, there exists a wall $W_\vec{v}$, called the \emph{Giesker wall} bounding the chamber consisting of stability conditions $\sigma$ for which every $(H,D)$-twisted Gieseker stable sheaf of Chern character $\vec{v}$ is $\sigma$-stable. The Gieseker wall was computed in \cite{ch-nef} for $\Delta_{H,D}(E) \gg 0$. The method of proof of Theorem \ref{thm:restriction_allsurfaces} may be thought of as giving a rough approximation of the Gieseker wall.

We will show that $E$ cannot be destabilized by a subobject $A$ unless the radius of $W(A,E)$ is smaller than an explicit bound. Recall that by Lemma \ref{lemma:destab_subsheaf}, any such destabilizing object must be a sheaf. Case (1) of the next lemma first appears in \cite[Lemma 6.3]{abch} for $X=\P^2$. The proof in the general situation is essentially identical.

\begin{lemma}\label{lemma:bound_on_giesker_wall}
Let $E$ be an $(H,D)$-twisted Gieseker semistable sheaf on $X$ of positive rank, and suppose $A$ is a coherent sheaf.
\begin{enumerate}
\item If $A \to E$ is a map of coherent sheaves which is an inclusion of $\sigma$-semistable objects of the same slope for some $\sigma = (Z_{s,t}, \mathcal{A}_s)$ in the $(H,D)$-slice with $(s,t) \in W(A,E)$, then $A$ and $E$ are in $\mathcal{Q}_{s'}$ for all $(s',t') \in W(A,E)$.
\item If $E \to Q$ is a map of coherent sheaves such that $E[1] \to Q[1]$ is a surjection of $\sigma$-semistable objects of the same slope for some $\sigma = (Z_{s,t}, \mathcal{A}_s)$ in the $(H,D)$-slice with $(s,t) \in W(E[1],Q[1])$, then $E$ and $Q$ are in $\mathcal{F}_{s'}$ for all $(s',t') \in W(E[1],Q[1])$.
\end{enumerate}
\end{lemma}

We record a final technical lemma before proving the theorem.

\begin{lemma}\label{lemma:deltaE_larger}
Let $E$ be a vector bundle of rank $r$ on $X$ and $C$ any curve. Then there exists a line bundle $L$ such that $\mu_{H,D}(E \otimes L) > 0$ and
\[
r(r-1)\Delta_{H,D}(E\otimes L)-\mu_{H,D}(E\otimes L) \geq r(r-1)\Delta_{H,D}(E(-C) \otimes L)-\mu_{H,D}(E(-C)\otimes L).
\]

\end{lemma}
\begin{proof}
By the Hodge Index Theorem, we may write $c_1(E) = eH + \varepsilon$, $[C] = cH + \gamma$, $D = dH + \delta$, and $c_1 L = lH + \lambda$, where $\varepsilon, \gamma, \delta, \lambda \in H^\perp$ and $\varepsilon^2, \delta^2, \gamma^2, \lambda^2 \leq 0$. By taking $l \gg 0$, the inequality $\mu_{H,D}(E\otimes L) > 0$ is immediately satisfied.

If $\gamma = 0$, then $C$ is a multiple of $H$ and we may take $\lambda = 0$. Suppose $\gamma \neq 0$. Lemma \ref{lemma:change_of_deltas} gives:
\begin{align*}
	H^2\Delta_{H,D}(E \otimes L) &= \Delta(E) - \frac{1}{2r^2}\left( r\delta - \varepsilon - r \lambda\right)^2\\
	H^2\Delta_{H,D}(E \otimes L \otimes \O(-C)) &= \Delta(E) - \frac{1}{2r^2}\left(r\delta - \varepsilon - r \lambda + r \gamma\right)^2
\end{align*}
Thus,
\[
\Delta_{H,D}(E\otimes L) - \Delta_{H,D}(E \otimes L \otimes \O(-C)) = \frac{1}{2}\left(\gamma^2 + 2\gamma \cdot \delta - \frac{2\gamma \cdot \varepsilon}{r} - 2\gamma \cdot \lambda\right).
\]
Since $\gamma^2 < 0$, we may take $L = lH+ m\gamma$ for $m \gg 0$ and $l$ fixed.
\end{proof}

\begin{thm}\label{thm:restriction_allsurfaces}
Suppose $(X,H)$ is a smooth polarized surface, $C$ an integral curve on $X$, and $E$ a $\mu_{H,D}$-stable sheaf of rank $r \geq 2$ on $X$. Then $E|_C$ is stable if
\[
\frac{C^2}{2H \cdot C} > r(r-1)\Delta(E)+\frac{1}{2r(r-1)H^2}.
\]
\end{thm}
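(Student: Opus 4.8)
The plan is to exhibit a single Bridgeland stability condition $\sigma$ on the wall $W\bigl(E,E(-C)[1]\bigr)$ at which $E$ is not destabilized by a subsheaf. Once this is done, the distinguished triangle $E\to i_\ast E|_C\to E(-C)[1]$ together with Lemmas~\ref{lemma:destab_subsheaf}(1) and \ref{lemma:bridgelandstable_implies_slopestable}(1) forces $E|_C$ to be slope stable on $C$; the semistable case is identical, using part (2) of each lemma. Write $\lambda$ for the left-hand side of the inequality. A short computation with the formula for $\mu_{s,t}$ in Section~\ref{sec:preliminaries} identifies $W\bigl(E,E(-C)[1]\bigr)$ as the semicircle with centre $s_0=\mu_{H,D}(E)-\lambda$ and radius $\rho_0$ given by $\rho_0^{2}=\lambda^{2}-2\Delta_{H,D}(E)$, so that $\lambda=\mu_{H,D}(E)-s_0$ is precisely where the theorem's hypothesis will enter. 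The arithmetic--geometric mean inequality applied to the hypothesis gives $\lambda>\sqrt{2\Delta_{H,D}(E)}$, hence $\rho_0^{2}>0$ and this semicircle is nonempty; and, as recorded at the end of Section~\ref{sec:preliminaries}, $E$, $E(-C)[1]$ and $i_\ast E|_C$ all lie in $\mathcal{A}_s$ and share a $\mu_{s,t}$-slope along it. I will take $\sigma=\sigma_0$ to be the condition at the apex $(s_0,\rho_0)$, so the whole problem reduces to showing $E$ is $\sigma_0$-stable.

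Since $E$ is $\mu_{H,D}$-stable it is $(H,D)$-twisted Gieseker stable, hence $\sigma$-stable for $t\gg0$; so either $E$ has no destabilizing wall (and $\sigma_0$-stability is immediate) or there is a largest one, $W_0=W(A,E)$, a semicircle along which $E$ is semistable. Lemma~\ref{lemma:bound_on_giesker_wall} then guarantees that $A$, $E$, and $Q:=E/A$ are sheaves in $\mathcal{Q}_s$ along $W_0$, each semistable of the same slope, with $A\subsetneq E$ a subsheaf of rank $r_A\in\{1,\dots,r-1\}$ and $\mu_{H,D}(A)<\mu_{H,D}(E)$ by stability. The heart of the proof is to bound $\rho_{W_0}$. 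At the apex of $W_0$ the common $\mu_{s,t}$-slope equals $0$, so, writing $x_F:=\mu_{H,D}(F)-s_{W_0}$, one has $x_F^{2}=2\Delta_{H,D}(F)+\rho_{W_0}^{2}$ for $F\in\{A,E,Q\}$, the additivity relation $r\,x_E=r_A x_A+r_Q x_Q$, and $x_A,x_Q\ge\rho_{W_0}$ from the Bogomolov inequality $\Delta_{H,D}(A),\Delta_{H,D}(Q)\ge0$ for tilt-semistable sheaves. Setting $c:=x_E-\rho_{W_0}\ge0$, the identity $x_E^{2}=2\Delta_{H,D}(E)+\rho_{W_0}^{2}$ becomes $2\rho_{W_0}c+c^{2}=2\Delta_{H,D}(E)$. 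Moreover $r\,\ch_1(A)-r_A\,\ch_1(E)$ is an integral class with strictly negative intersection against $H$, so $H\cdot\bigl(r\,\ch_1(A)-r_A\,\ch_1(E)\bigr)\le-1$; this forces $x_Q-x_A\ge\tfrac{1}{r_Ar_QH^{2}}$, hence $rc=r_A(x_A-\rho_{W_0})+r_Q(x_Q-\rho_{W_0})\ge r_Q(x_Q-x_A)\ge\tfrac{1}{r_AH^{2}}$, that is, $c\ge\tfrac{1}{r\,r_A\,H^{2}}$. Since $\rho_{W_0}=\Delta_{H,D}(E)/c-c/2$ is decreasing in $c$, we obtain
\[
\rho_{W_0}\ \le\ r\,r_A\,H^{2}\,\Delta_{H,D}(E)-\frac{1}{2\,r\,r_A\,H^{2}}\ \le\ r(r-1)\,H^{2}\,\Delta_{H,D}(E)-\frac{1}{2\,r(r-1)\,H^{2}},
\]
the last step by monotonicity in $r_A$ and $r_A\le r-1$.

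To conclude, put $\mu_\ast=r(r-1)H^{2}\Delta_{H,D}(E)$ and $\nu_\ast=\tfrac{1}{2r(r-1)H^{2}}$, so that $2\Delta_{H,D}(E)=4\mu_\ast\nu_\ast$ and the hypothesis reads exactly $\lambda>\mu_\ast+\nu_\ast$. Then $\rho_0^{2}=\lambda^{2}-4\mu_\ast\nu_\ast>(\mu_\ast+\nu_\ast)^{2}-4\mu_\ast\nu_\ast=(\mu_\ast-\nu_\ast)^{2}$, so $\rho_0>\lvert\mu_\ast-\nu_\ast\rvert\ge\rho_{W_0}$ (and if $\mu_\ast-\nu_\ast<0$ the bound above already shows there is no destabilizing wall). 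Since distinct virtual walls $W(E,\,\cdot\,)$ are pairwise non-crossing \cite{maciocia}, $W_0$ lies strictly inside the disc bounded by $W\bigl(E,E(-C)[1]\bigr)$, so its apex $(s_0,\rho_0)$ lies strictly outside $W_0$ and outside every smaller destabilizing wall. Hence $E$ is $\sigma_0$-stable, and Lemmas~\ref{lemma:destab_subsheaf} and \ref{lemma:bridgelandstable_implies_slopestable} give that $E|_C$ is slope stable on $C$. For the ``in particular'' clause, when $C$ has class $dH$ one computes $\lambda=d/2$ independently of $D$, and choosing $D$ so that $H^{2}\Delta_{H,D}(E)=\Delta(E)$ as in Lemma~\ref{lemma:change_of_deltas} turns the displayed inequality into $\tfrac d2>r(r-1)\Delta(E)+\tfrac{1}{2r(r-1)H^{2}}$.

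I expect the radius estimate of the second paragraph to be the main obstacle: extracting the exact additive constant $\tfrac{1}{2r(r-1)H^{2}}$ needs the apex computation, the Bogomolov inequality for tilt-semistable sheaves, and the integrality of $r\,\ch_1(A)-r_A\,\ch_1(E)$, and one must check that the destabilizer at the maximal wall is genuinely a subsheaf — which is what Lemma~\ref{lemma:bound_on_giesker_wall} is designed to provide. The other ingredients (the identification of $W\bigl(E,E(-C)[1]\bigr)$, the nestedness of walls, and the existence of a largest destabilizing wall) are routine given Section~\ref{sec:preliminaries}.
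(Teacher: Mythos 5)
Your argument is correct and follows the paper's proof in all essentials: both bound the largest destabilizing wall $W(A,E)$ using Lemma~\ref{lemma:bound_on_giesker_wall} together with the integrality gap $\mu_{H,D}(E)-\mu_{H,D}(A)\ge 1/(r(r-1)H^2)$, and then place $W(E,E(-C)[1])$ outside it by nestedness of walls before invoking Lemmas~\ref{lemma:destab_subsheaf} and \ref{lemma:bridgelandstable_implies_slopestable}. The only difference is presentational: you compare radii via the apex computation, the quotient $Q$, and the Bogomolov inequality, while the paper compares centers via the right endpoint $s_2\le\mu_{H,D}(A)$ --- equivalent routes, since the center and radius of a numerical wall for $\ch(E)$ determine one another.
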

\begin{proof}
Since twisting $E$ by a line bundle does not affect its slope stability or the that of $E|_C$, by Lemma \ref{lemma:deltaE_larger} we may assume without loss of generality that $\mu_{H,D}(E) > 0$ and
\[
r(r-1)\Delta_{H,D}(E)-\mu_{H,D}(E) \geq r(r-1)\Delta_{H,D}(E(-C)[1])-\mu_{H,D}(E(-C)[1]).
\]

Take $D$ as in Lemma \ref{lemma:change_of_deltas} with $H \cdot D = 0$ and observe that the inequality in the statement of the theorem is then equivalent to
\begin{equation}\label{inequality:res_thm}
\frac{C^2}{2H \cdot C} - \frac{\ch_1^D(E) \cdot C}{r H \cdot C} + \mu_{H,D}(E) > r(r-1)H^2\Delta_{H,D}(E)+\frac{1}{2r(r-1)H^2}.
\end{equation}

We show that if (\ref{inequality:res_thm}) holds, then $E$ and $E(-C)[1]$ are $\sigma$-stable for stability conditions $\sigma$ lying on $W(E,E(-C)[1])$. Assume for a contradiction that there is a torsion-free sheaf $A$ and a map $A \to E$ destabilizing $E$ in $\mathcal{A}_s$. There are two possibilities: $\ch_0(A) < r$ and $\ch_0(A) \geq r$.

Assume first that $\ch_0(A) < r$. Let $(s_1,0)$ and $(s_2,0)$ be the endpoints of $W(A,E)$ with $s_1 < s_2$. If $(s_0,0)$ is the center of $W(A,E)$, then $(s_2-s_0)^2 = (\mu_{H,D}(E) - s_0)^2 - 2\Delta_{H,D}(E)$. Solving for $s_0$, we have:
\[
s_0 = \frac{1}{2}(\mu_{H,D}(E) + s_2) - \frac{\Delta_{H,D}(E)}{\mu_{H,D}(E) - s_2}.
\]
By Lemma \ref{lemma:bound_on_giesker_wall}, $s_2 \leq \mu_{H,D}(A) \leq \mu_{H,D}(E) - 1/(r(r-1)H^2)$, and so the wall $W(A,E)$ is bounded by the wall with center
\begin{equation}\label{eqn:center_1stcase}
s_0' = \mu_{H,D}(E) - \frac{1}{2r(r-1)H^2} - r(r-1)H^2\Delta_{H,D}(E)
\end{equation}

Suppose now that $\ch_0(A) \geq r$ and that $W(A,E)$ is the largest actual wall in the $(H,D)$-slice where $E$ is destabilized. Let
\[
0 \to A \to E \to Q \to 0
\]
be the destabilizing sequence in $\mathcal{A}_s$ with $s < \mu_{H,D}(E)$. We have that $A$ and $Q$ are semistable and consequently that $\Im(Z_{s,t}(A)), \Im(Z_{s,t}(Q)) > 0$. Since $\Im(Z_{s,t}) = H \cdot \ch_1^{D+sH}$, we conclude that
\begin{equation}\label{inequality:mainthm_c1_bound}
H\cdot\ch_1^D(A) > sH^2 \qquad \text{and} \qquad H\cdot\ch_1^D(Q) > sH^2.
\end{equation}
There are two possibilities to consider: either $(0,t)$ lies outside all actual walls for $\vec{v}$ for all $t>0$ or there is a point $(0,t)$ on $W(A,E)$. In the former case, the wall $W(A,E)$ is then bounded by the wall with center
\begin{equation}\label{eqn:center_2ndcase}
s_0'' = \mu_{H,D}(E) - \frac{\Delta_{H,D}(E)}{\mu_{H,D}(E)}.
\end{equation}
On the other hand, if $(0,t) \in W(A,E)$ for some $t$, then $A$ and $Q$ must destabilize $E$ in $\mathcal{A}_0$. By Inequality (\ref{inequality:mainthm_c1_bound}), we conclude that $
H \cdot \ch_1^D(A) > 0$ and $H \cdot \ch_1^D(Q) > 0$. Since we have assumed that $H\cdot\ch_1^D(E) > 0$, it follows that $0 < H \cdot \ch_1^D(A) < H \cdot \ch_1^D(E)$. Since we have chosen $D$ as in Lemma \ref{lemma:change_of_deltas}, we have $H \cdot D = 0$ and thus $\ch_1^D(E) = \ch_1(E)$ and $\ch_1^D(A) = \ch_1(A)$. Moreover, $\mu_{H,D}(A) < \mu_{H,D}(E)$, and therefore:
\[
\mu_{H,D}(A) \leq \mu_{H,D}(E) - \frac{1}{rH^2}.
\]
Repeating the arguments above, we conclude that $W(A,E)$ is bounded by the wall with center
\begin{equation}\label{eqn:center_3rdcase}
s_0''' = \mu_{H,D}(E) - \frac{1}{2rH^2} - rH^2\Delta_{H,D}(E)
\end{equation}

Comparing the centers in Equations (\ref{eqn:center_1stcase}), (\ref{eqn:center_2ndcase}), and (\ref{eqn:center_3rdcase}), we see that the center in Equation (\ref{eqn:center_1stcase}) corresponds to the largest wall. On the other hand, the center $s$ of $W(E, E(-C)[1])$ is
\[
s = \frac{C \cdot \ch_1^D(E)}{rH \cdot C} - \frac{C^2}{2 H \cdot C} < \mu_{H,D}(E)-r(r-1)H^2\Delta_{H,D}(E) - \frac{1}{2r(r-1)H^2}  = s_0'
\]
by our assumption. It follows that if Inequality (\ref{inequality:res_thm}) holds, then $E$ must be $\sigma$-stable for any stability condition $\sigma = (Z_{s,t},\mathcal{A}_s)$ with $(s,t)\in W(E,E(-C)[1])$.

We next show that $E(-C)[1]$ is $\sigma$-stable for conditions $\sigma$ on $W(E,E(-C)[1])$. We repeat the above argument for $E(-C)[1]$ using part (2) of Lemma \ref{lemma:bound_on_giesker_wall} to show that if
\[
\frac{C^2}{2H \cdot C} - \frac{\ch_1^D(E) \cdot C}{r H \cdot C} + \mu_{H,D}(E(-C)) > r(r-1)H^2\Delta_{H,D}(E(-C))+\frac{1}{2r(r-1)H^2},
\]
then $E(-C)[1]$ is $\sigma$-semistable along $W(E,E(-C)[1])$. This is automatically satisfied by our assumption. Thus $E|_C$ is slope stable by Lemmas \ref{lemma:bridgelandstable_implies_slopestable} and \ref{lem:stableimplies_stableres} if Inequality (\ref{inequality:res_thm}) is satisfied, which is equivalent to the inequality in the theorem by our choice of $D$.
\end{proof}

\subsection{General sheaves on $\P^2$}
Let $\vec{v}$ be a stable Chern character on $\P^2$ and $C \subset \P^2$ any integral curve. In this section we give sufficient criteria for the restriction $E|_C$ of a general element of $M(\vec{v})$ to be slope stable on $C$. The method of proof is similar to the above in that we exploit the distinguished triangle $E \to E|_C \to E(-C)[1] \to E[1]$. The main difference is that we require the wall $W(E,E(-C)[1])$ to be outside the so-called \emph{effective wall}, beyond which the general Gieseker stable vector bundle is $\sigma$-stable. The effective wall for $M_{\P^2}(\vec{v})$ was computed in \cite{chw}. To bound the effective wall, we need a few definitions. For details on stable sheaves on $\P^2$ we refer to \cite{lepotier} and \cite{chw}.

An \emph{exceptional bundle} $E$ on $\P^2$ is a (Gieseker) stable vector bundle such that $\Ext^1(E,E) =0$. A rational number $\nu$ is called an \emph{exceptional slope} if it is the slope of an exceptional bundle. If $\nu$ is an exceptional slope, then there is a unique exceptional bundle of slope $\nu$. Exceptional bundles are precisely the stable bundles $E$ with $\Delta(E) < \frac{1}{2}$.

Exceptional bundles are important for the classification of stable bundles on $\P^2$. If $\nu$ is an exceptional slope, let $\Delta_\nu$ denote the discriminant of the unique exceptional bundle of slope $\nu$ and put
\[
x_0(\nu) = \frac{3- \sqrt{5 + 8 \Delta_\nu}}{2}.
\]
Let $I_\nu$ be the open interval $I_\nu = (\nu-x_0(\nu), \nu+x_0(\nu))$ and let $P(x) = \frac{1}{2}(x^2+3x+2)$ be the Hilbert polynomial of $\O_{\P^2}$. We define the function $\delta(\mu)$ as
\[
\delta(\mu) = P(-|\mu-\nu|) - \Delta_\nu, \text{ if } \mu\in I_\nu.
\]

By \cite{drezet}, every slope $\mu$ of a sheaf lies in some interval $I_\nu$. A theorem of Dr\'ezet and Le Potier \cite{drezet-lepotier} says that for a stable Chern character $\vec{v}$ with $\Delta(\vec{v}) \geq \delta(\mu(\vec{v}))$, the moduli space $M(\vec{v})$ is a normal, irreducible, factorial projective variety of the expected dimension $r^2(2\Delta(\vec{v}) - 1)+1$. Furthermore, if $\Delta(\vec{v}) > \delta(\mu(\vec{v}))$, then $\pic(M(\vec{v}))$ is a free abelian group of rank 2 \cite{drezet}. We will assume for the rest of this section that $\vec{v}$ is such that $M(\vec{v})$ has Picard rank 2.

\begin{definition}[\cite{chw}]
If $\vec{v}$ is a Chern character, we define the \emph{associated parabola} $Q_\vec{v}$ in the $(\mu,\Delta)$-plane as the equation
\[
P(\mu+\mu(\vec{v})) - \Delta(\vec{v}) = \Delta.
\]
\end{definition}

\begin{thm}[\protect{\cite[Thm. 3.1]{chw}}]\label{thm:chw_munaught}
The parabola $Q_{\vec{v}}$ intersects the line $\Delta = \frac{1}{2}$ at two points. If $\mu_0(\vec{v}) \in \R$ is the larger of the two slopes such that $(\mu_0(\vec{v}), \frac{1}{2})\in Q_\vec{v}$, then there is a unique exceptional slope $\nu=\nu(\vec{v})$ such that $\mu_0(\vec{v}) \in (\nu-x_0(\nu), \nu+x_0(\nu)$.
\end{thm}

The slope $\mu_0(\vec{v})$ is computed explicitly in \cite{chw} and is given by the formula
\[
\mu_0(\vec{v}) = \frac{-3 - 2\mu(\vec{v}) + \sqrt{5+8\Delta(\vec{v})}}{2}.
\]

The unique exceptional bundle from the previous theorem is called the \emph{corresponding exceptional bundle}, and its Chern character is called the \emph{corresponding exceptional character}.

\begin{definition}
Given two coherent sheaves $E$ and $F$ on any smooth surface $X$, we define the pairing
\[
\chi(E,F) = \sum_{i=1}^2 (-1)^i \ext^i(E,F).
\]
By the Riemann-Roch formula, $\chi(E,F)$ depends only the Chern characters of $E$ and $F$ and therefore also defines a pairing on $\knum(X)\otimes \R$. If $\vec{v}$ is a Chern character, then $\vec{v}^\ast$ will denote the \emph{dual Chern character}: if $\vec{v} = (r, c_1, \ch_2)$, then $\vec{v}^\ast = (r, -c_1, \ch_2)$.
\end{definition}

\begin{definition}\label{def:exceptional_characters}
Let $F$ be a stable sheaf with Chern character $\vec{v}$, and let $\vec{w}$ be the corresponding exceptional character. Define the \emph{corresponding orthogonal invariants} $\mu^+(\vec{v})$ and $\Delta^+(\vec{v})$ as follows.
\begin{enumerate}
	\item $(\mu^+(\vec{v}), \Delta^+(\vec{v})) = Q_\vec{v} \cap Q_{\vec{w}^\ast}$ if $\chi(\vec{v}^\ast,\vec{w}) > 0$.
	\item $(\mu^+(\vec{v}), \Delta^+(\vec{v})) = (\mu(\vec{w}),\Delta(\vec{w}))$ if $\chi(\vec{v}^\ast,\vec{w}) = 0$.
	\item $(\mu^+(\vec{v}), \Delta^+(\vec{v})) = Q_\vec{v} \cap Q_{\vec{w}^\ast-3}$ if $\chi(\vec{v}^\ast,\vec{w}) < 0$, where $\vec{w}^\ast-3 = (\rk(\vec{w}), - c_1(\vec{w})-3, \ch_2(\vec{w}))$.
\end{enumerate}
\end{definition}

Let $E \in M(\vec{v})$ be general, and suppose $E^+$ is a sheaf with slope $\mu^+(\vec{v})$, discriminant $\Delta^+(\vec{v})$, and rank $r^+$, where $r^+$ is sufficiently large and divisible. Then for stability conditions $\sigma \in W(E,E^+)$, there is an exact sequence
\[
0 \to E^+ \otimes \Hom(E^+, E) \to E \to W \to 0
\]
in the category $\mathcal{A}_s$, where $W$ is the mapping cone of the evaluation map $E^+ \otimes \Hom(E^+, E) \to E$. By computing the Gieseker walls for $E^+$ and $W$, we can show that $E$ is $\sigma$-stable for $\sigma \in W(E,E^+)$, and thereby compute the effective wall for $E$. This is carried out in \cite{chw}.

By \cite[Thm. 5.7]{chw}, the center of the effective wall is $s_0 = -\mu^+(E) - 3/2$. To prove Theorem \ref{thm:restriction_p2}, it therefore suffices to show that the center of $W(E,E(-C)[1])$ is at most $s_0$. The center of $W(E,E(-C)[1])$ is $\mu(E) - d/2$, where $d$ is the degree of the curve $C$. To produce sufficient criteria to ensure $\mu(E) - d/2 < s_0$, we estimate $\mu^+(E)$ in the next lemma.

\begin{lemma}
Let $\vec{v}$ be a stable Chern character on $\P^2$. Then
\[
\mu^+(\vec{v}) \leq \frac{\sqrt{5+8\Delta(\vec{v})}}{2}+1 -\mu(\vec{v})
\]
for $\Delta(\vec{v}) \gg 0$.
\begin{proof}
Let $\vec{w}$ be the corresponding exceptional character. We begin by producing different bounds for $\mu^+(\vec{v})$ depending on $\chi(\vec{v}^\ast, \vec{w})$. To simplify notation, let $r = \rk(\vec{v})$, $\mu = \mu(\vec{v})$, $\mu_0=\mu_0(\vec{v})$ as in Theorem \ref{thm:chw_munaught}, $\Delta = \Delta(\vec{v})$, $\mu^+ = \mu^+(\vec{v})$, and $\Delta^+ = \Delta^+(\vec{v})$. Also set $r' = \rk(\vec{w})$, $\mu' = \mu(\vec{w})$, $\Delta' = \Delta(\vec{w})$, and $x_0 = x_0(\vec{w})$ as the invariants of the exceptional character $\vec{w}$.

We claim the following inequalities hold:
\begin{enumerate}
	\item If $\chi(\vec{v}^\ast,\vec{w}) > 0$, then
	\[
		\mu^+ \leq \frac{2\Delta}{\sqrt{1 + 8 \Delta}-3}-\frac{3}{2}+\frac{\sqrt{5+8\Delta} - \sqrt{5}}{4}-\mu.
	\]
	\item If $\chi(\vec{v}^\ast,\vec{w}) \leq 0$, then
	\[
		\mu^+ \leq \frac{\sqrt{5+8\Delta}-\sqrt{5}}{2} - \mu.
	\]
\end{enumerate}
If $\chi(\vec{v}^\ast,\vec{w}) > 0$, then $(\mu^+, \Delta^+) = Q_\vec{v} \cap Q_{\vec{w}^\ast}$. Direct computation shows that
\[
	\mu^+ = \frac{\Delta-\Delta'}{\mu+\mu'}-\frac{3}{2}+\frac{\mu'-\mu}{2}.
\]
Since $\chi(\vec{v}^\ast , \vec{w}) > 0$, the Riemann-Roch formula implies $rr'\left(P(\mu+\mu')-\Delta-\Delta'\right) > 0$, or
\[
(\mu+\mu')^2+3(\mu+\mu')+2 > 2(\Delta+\Delta').
\]
Using that $\mu+\mu' >0$ \cite[Lemma 3.6]{chw}, we have
\[
\mu+\mu' > \frac{\sqrt{8(\Delta + \Delta')+1}-3}{2}.
\]
Moreover, since $\mu'$ is in the interval $[\mu_0-x_0, \mu_0+x_0]$, we see that $\mu'\leq \mu_0+x_0$, or
\[
\mu'\leq \frac{\sqrt{5+8\Delta}-\sqrt{5+8\Delta'}}{2}-\mu
\]
By the Bogomolov inequality and the fact that $\vec{w}$ is exceptional, we have $\Delta \geq \frac{1}{2} > \Delta' \geq 0$. We obtain the following estimates:
\[
\frac{\Delta - \Delta'}{\mu + \mu'} \leq \frac{2\Delta}{\sqrt{1 + 8 \Delta}-3},
\]
and
\[
\mu' - \mu \leq \frac{\sqrt{5+8\Delta} - \sqrt{5}}{2}-2\mu.
\]
Combining these with the formula for $\mu^+$, we obtain (1).

Suppose now that $\chi(\vec{v}^\ast,\vec{w}) \leq 0$. Then $\mu^+$ lies in the interval $[\mu_0 - x_0, \mu_0 + x_0]$ (see \cite{chw}). Thus $\mu^+ \leq \mu_0+x_0$, which immediately gives (2). Direct computation now shows that for $\Delta \gg 0$, the right hand sides of Inequalities (1) and (2) are dominated by
\[
\mu^+(\vec{v}) \leq \frac{\sqrt{5+8\Delta(\vec{v})}}{2}+1 -\mu(\vec{v}).
\]
\end{proof}
\end{lemma}

Note that the assumption ``$\Delta(\vec{v}) \gg 0$'' in the above lemma is fairly mild. In fact, it suffices to have $\Delta(\vec{v}) \geq 4$. To see this, observe that the inequality
\[
\frac{2\Delta}{\sqrt{1 + 8 \Delta}-3}-\frac{3}{2}+\frac{\sqrt{5+8\Delta} - \sqrt{5}}{4}<\frac{\sqrt{5+8\Delta}}{2}+1
\]
holds as soon as $16 \Delta^6 - 32 \Delta^5 - 44 \Delta^4 - 56 \Delta^3 - 59 \Delta^2 - 20 \Delta + 4 > 0$. The largest root of this polynomial is $\Delta \approx 3.3$. Moreover, the only issue that occurs is that the sum $\mu(\vec{v}) + \mu(\vec{w})$ can tend to zero in the case $\chi(\vec{v}^\ast, \vec{w}) > 0$, which in turn causes $\mu^+(\vec{v})$ to grow very large. By \cite[Lemma 3.6]{chw}, $\mu(\vec{v}) + \mu(\vec{w})$ is strictly positive when $\chi(\vec{v}^\ast,\vec{w}) > 0$, and so in practice this computation can be made sharp.

\begin{thm}\label{thm:restriction_p2}
Suppose $C \subset \P^2$ is a degree $d$ integral curve and $E$ is a general slope stable vector bundle on $\P^2$ such that the moduli space $M_{\P^2}(E)$ has Picard rank 2. Then $E|_C$ is stable if $d > \sqrt{5+8\Delta(E)}+5$ and $\Delta \geq 4$.
\end{thm}
\begin{proof}
Let $\vec{v}$ denote the Chern character of $E$. If $d > \sqrt{5+8\Delta(E)}+5$ and $\Delta \geq 4$, then by the above lemma, $\mu(\vec{v}) - d/2 < -\mu^+(\vec{v}) - 3/2$. This shows that for the general $E \in M(\vec{v})$, the wall $W(E,E(-C)[1])$ lies outside the effective wall for $E$. Arguing as in Theorem \ref{thm:restriction_allsurfaces}, $E$ and $E(-C)[1]$ are $\sigma$-stable for conditions lying on $W(E,E(-C)[1])$. By Lemma \ref{lem:stableimplies_stableres}, it follows that $E|_C$ is stable.
\end{proof}

\subsection{Extending stable vector bundles} Rather than restricting from surfaces to curves, we can ask the opposite question: when does a stable vector bundle on $C$ extend to a stable vector bundle on $X$? We will assume $X$ is a smooth surface and $\vec{v}$ is a stable Chern character on $X$ such that $\Delta(\vec{v})$ is large enough that $M_{X,(H,D)}(\vec{v})$ is well-behaved:

\begin{thm*}[O'Grady \cite{ogrady}]
Let $(X,H)$ be a smooth polarized surface, and $\vec{v}$ a Chern character with $r(\vec{v}) > 0$. If $\Delta(\vec{v}) \gg 0$, then $M_{X,(H,D)}(\vec{v})$ is normal, generically smooth, irreducible, and nonempty of the expected dimension $2r^2\Delta(\vec{v})-(r^2-1)\chi(\O_X)$. Furthermore, slope stable sheaves are dense in $M_{X,(H,D)}(\vec{v})$.
\end{thm*}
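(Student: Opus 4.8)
The plan is to follow the standard deformation-theoretic strategy, reducing the statement to dimension estimates on certain ``bad'' loci, with the genuinely hard input being the irreducibility argument. First I would recall that at a point $[E]$ corresponding to a $\mu_{H,D}$-stable sheaf the Zariski tangent space of $M_{X,(H,D)}(\vec v)$ is $\Ext^1(E,E)$, while the obstructions (after removing the contribution of the determinant via the trace map) lie in $\Ext^2(E,E)_0$. Since stability forces $\hom(E,E)=1$, a Riemann--Roch computation on $X\times X$ identifies
\[
\ext^1(E,E)_0 - \ext^2(E,E)_0 = 2r^2\Delta(\vec v) - (r^2-1)\chi(\O_X),
\]
so $M_{X,(H,D)}(\vec v)$ is smooth of the expected dimension at every $[E]$ with $\Ext^2(E,E)_0 = 0$. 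The whole argument is then a matter of controlling the three types of degeneracy: sheaves with nonvanishing obstruction, non-locally-free sheaves, and sheaves that are semistable but not slope stable.

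For generic smoothness I would use Serre duality, $\Ext^2(E,E)_0\cong\Hom(E,E\otimes K_X)_0^\vee$, so that the obstructed locus consists of those $[E]$ carrying a nonzero trace-free map $E\to E\otimes K_X$; a Bogomolov-type inequality bounds the dimension of this locus by a quantity that grows strictly more slowly than the expected dimension, so once $\Delta(\vec v)\gg0$ it is a proper closed subset, giving generic smoothness. For nonemptiness I would start from a fixed $\mu_{H,D}$-stable locally free sheaf $E_0$ of the prescribed rank with small discriminant (produced by the Serre construction or by Maruyama's existence theorem) and perform iterated elementary modifications $0\to E'\to E_0\to \O_Z\to 0$ at general points: each modification raises $\Delta$ by a controlled amount and, for general $Z$, preserves slope stability, so after reflexivizing one realizes every sufficiently large discriminant compatible with $\vec v$; the twist by $D$ does not affect this.

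The main obstacle is \emph{irreducibility}, which is the core of O'Grady's work. The strategy is to show that every irreducible component of $M_{X,(H,D)}(\vec v)$ meets one fixed component. Concretely, one proves that the generic member of an arbitrary component is a vector bundle arising as an elementary modification of a bundle of strictly smaller discriminant, so that induction on $\Delta(\vec v)$ reduces the question to small discriminants where $M_{X,(H,D)}(\vec v)$ can be analyzed directly (for instance using that it dominates an irreducible Quot scheme parametrizing the modifications). Making this induction go through requires sharp estimates showing that the non-locally-free sheaves, the strictly semistable sheaves, and the obstructed sheaves each sweep out loci of codimension growing with $\Delta(\vec v)$, so that none of them can form a component — this is exactly where $\Delta(\vec v)\gg0$ is used quantitatively, and it is the most delicate part of the argument.

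Finally I would deduce normality and density of the slope stable locus. Gieseker's construction presents $M_{X,(H,D)}(\vec v)$ as a GIT quotient of an open subset of a Quot scheme which is a local complete intersection, smooth at unobstructed points; since the obstructed and strictly semistable loci have codimension at least $2$ once $\Delta(\vec v)\gg0$, Serre's criterion combined with generic smoothness yields normality. Density of the slope stable sheaves follows because a semistable-but-not-slope-stable sheaf is an extension of sheaves of smaller rank (and the properly $(H,D)$-Gieseker-semistable sheaves are likewise governed by smaller invariants), so these form closed subsets of strictly smaller dimension, leaving the slope stable sheaves dense.
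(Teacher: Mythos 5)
The paper does not prove this statement: it is O'Grady's theorem, quoted verbatim as an external input (with reference to \cite{ogrady}) and used as a black box to guarantee that $M_{X,(H,D)}(\vec{v})$ is well behaved before Proposition \ref{prop:extending_vb}. So there is no internal proof to compare yours against; the only fair comparison is with O'Grady's argument itself. Measured against that, your outline identifies the correct architecture: the deformation-theoretic identification of the tangent space and trace-free obstruction space, the Riemann--Roch computation $\ext^1(E,E)_0-\ext^2(E,E)_0=2r^2\Delta(\vec v)-(r^2-1)\chi(\O_X)$ (which does match the expected dimension in the statement), Serre duality to control the obstructed locus, elementary modifications for nonemptiness, and an induction on $\Delta$ for irreducibility. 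This is genuinely the shape of the proof in the literature (O'Grady, and the closely related Gieseker--Li arguments).

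However, what you have written is a roadmap, not a proof, and the gap is concentrated exactly where you flag it: every step ultimately rests on quantitative codimension estimates for the loci of obstructed, non-locally-free, and properly semistable sheaves, and none of these estimates is carried out. Without them, generic smoothness, the dimension count, irreducibility, normality, and density of the slope stable locus are all unsupported. Two further points are stated too loosely. First, the claim that the relevant open subset of the Quot scheme is a local complete intersection is not an input to the argument but a consequence of it: one needs the dimension bound (every component has at least the expected dimension) together with the fact that the Quot scheme is locally cut out by at most $\ext^2(E,E)_0$ equations before one can conclude l.c.i., hence Cohen--Macaulay, and only then does Serre's criterion apply. Second, the irreducibility induction as you describe it ("the generic member of an arbitrary component is an elementary modification of a bundle of smaller discriminant, reduce to small discriminant") does not quite match O'Grady's mechanism, which is to stratify by the length of $E^{\vee\vee}/E$ and show that every irreducible component contains a common irreducible deepest stratum fibered over a moduli space of locally free sheaves and a punctual Quot-type scheme; reducing to "small discriminant where $M(\vec v)$ can be analyzed directly" is not available in general, since for small $\Delta$ the moduli space need not be irreducible or even nonempty. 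These are fixable by following \cite{ogrady}, but as written the proposal does not constitute a proof of the statement.
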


Let $C \subset X$ be a smooth curve in an ample class $dA$ (with $A$ not necessarily a multiple of the polarization $H$) such that the restriction of a general element of $M_{X,H}(\vec{v})$ to $C$ is stable. We have a restriction map $M_{X,H}(\vec{v}) \dashrightarrow U_C(r,c_1\cdot C)$, where $U_C(r,e)$ denotes the moduli space parameterizing semistable rank $r$, degree $e$ torsion-free sheaves on $C$.

The next proposition will show that for $d \gg 0$, the restriction map is generically finite. Note that $\dim U_C(r,e) = r^2(g-1)+1$, where $g$ is the genus of $C$, and $\dim M(\vec{v}) = r^2(2\Delta(\vec{v}) - 1) + 1$. By adjunction, $g$ grows with $d^2$, thus the image of $M_{X,H}(\vec{v})$ in $U_C(r,c_1\cdot C)$ is typically of large codimension.

\begin{prop}\label{prop:extending_vb}
With the above notation, if $d \gg 0$, then the dimension of the image of the restriction map equals the dimension of $M_{X,H}(\vec{v})$.
\end{prop}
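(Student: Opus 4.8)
The plan is to show that the restriction map is \emph{generically finite} onto its image; in characteristic zero this is equivalent to the image having dimension $\dim M(\vec{v})$. So it suffices to exhibit one point of $M(\vec{v})$ at which the restriction map is defined and its differential is injective.

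First I would set up the differential. By O'Grady's theorem, together with the standard fact that for $\Delta(\vec{v}) \gg 0$ the general member of $M(\vec{v})$ is a locally free slope stable sheaf, we may take $E \in M(\vec{v})$ to be simultaneously a smooth point of $M(\vec{v})$, a vector bundle, and a sheaf with $E|_C$ stable on $C$. At such a point $T_E M(\vec{v}) = \Ext^1_X(E,E) = H^1(X,\mathcal{E}nd(E))$ and $T_{E|_C}U_C(r, c_1\cdot C) = \Ext^1_C(E|_C, E|_C) = H^1(C,\mathcal{E}nd(E|_C))$, and functoriality of Kodaira--Spencer classes under the closed immersion $i\colon C \hookrightarrow X$ identifies the differential of the restriction map with the natural map
\[
H^1(X,\mathcal{E}nd(E)) \longrightarrow H^1(X, \mathcal{E}nd(E)|_C) = H^1(C,\mathcal{E}nd(E|_C)),
\]
where the identification of sheaves $\mathcal{E}nd(E)|_C = \mathcal{E}nd(E|_C)$ uses that $E$ is locally free.

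Next I would control the kernel. Tensoring $0 \to \mathcal{O}_X(-C) \to \mathcal{O}_X \to \mathcal{O}_C \to 0$ with $\mathcal{E}nd(E)$ and taking cohomology shows that the kernel of the displayed map is a quotient of $H^1(X, \mathcal{E}nd(E)(-C))$. Since $C \in |dH|$ and $\mathcal{E}nd(E)$ is self-dual, Serre duality gives
\[
H^1(X, \mathcal{E}nd(E)(-dH)) \cong H^1\big(X, \mathcal{E}nd(E)(dH)\otimes K_X\big)^{\vee},
\]
and the right-hand side vanishes for $d$ large by Serre vanishing. So for $d$ large the differential is injective, the restriction map is generically finite, and the image has dimension $\dim M(\vec{v})$.

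The one genuine subtlety --- and the step I expect to be the main obstacle --- is that the threshold ``$d$ large'' must be chosen \emph{uniformly} over all of $M(\vec{v})$: the proposition fixes a single curve $C$ and asks about the general bundle, so a bound on $d$ depending on $E$ would not suffice. This is exactly where boundedness enters: since $M(\vec{v})$ is projective, the sheaves $E \in M(\vec{v})$ form a bounded family, hence so do the sheaves $\mathcal{E}nd(E)\otimes K_X$, and for a bounded family uniform Serre vanishing produces a single $d_0$ with $H^1(X, \mathcal{E}nd(E)(dH)\otimes K_X) = 0$ for every $d \ge d_0$ and every $E \in M(\vec{v})$. For such $d$ the argument above applies at the general point of $M(\vec{v})$, and the proposition follows.
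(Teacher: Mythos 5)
Your proof is correct and follows essentially the same route as the paper: identify the differential of the restriction map with the natural map $\Ext^1_X(E,E)=H^1(X,\mathcal{E}nd(E))\to H^1(C,\mathcal{E}nd(E|_C))$, bound its kernel by $H^1(X,\mathcal{E}nd(E)(-C))$ via the restriction sequence, and kill that group for $d\gg 0$ by Serre duality plus Serre vanishing. The uniformity issue you flag at the end is handled correctly by boundedness, but it can also be dispatched more cheaply: injectivity of the differential is an open condition on the irreducible space $M(\vec{v})$, so exhibiting a single $E$ with $\Ext^1(E,E(-dH))=0$ already suffices.
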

\begin{proof}
Let $M_0 \subset M_{X,H}(\vec{v})$ denote the open, smooth subset of vector bundles $E$ with $\Ext^2(E,E) = 0$. Let $E \in M_0$ be general and consider the differential $T_{E}M_0 \to T_{E|_C}U_C(r,c_1\cdot C)$. Since $T_{E}M_0 = \Ext^1(E,E)$ and $T_{E|_C}U_C(r,c_1\cdot C) = \Ext^1_C(E|_C,E|_C)$, we have a map $\delta:\Ext^1(E,E) \to \Ext^1_C(E|_C,E|_C)$.

Next we apply the functor $\Hom(E,E\otimes -)$ to the short exact sequence $0 \to \O(-C) \to \O \to \O_C \to 0$. In the associated long exact sequence in cohomology, there is a map $\Ext^1(E,E) \to \Ext^1(E,E|_C)$. Now,
\[
\Ext^1(E,E|_C) = H^1(X,E^\vee \otimes E|_C) = H^1(C,E^\vee|_C\otimes E|_C) = \Ext^1_C(E|_C,E|_C).
\]
Thus the map $\Ext^1(E,E) \to \Ext^1(E,E|_C)$ in the long exact sequence is precisely the map $\delta$ (see, e.g., \cite[Prop. 2.6]{ch-hirzebruch}). Because $E$ is stable, $\hom(E,E(-C)) = 0$. Moreover,
\[
\ext^1(E,E(-C)) = h^1(E^\vee \otimes E(-C)) = h^1(E^\vee \otimes E\otimes \omega_X \otimes \O(dA)),
\]
by Serre duality. Thus for $d\gg 0$, this group vanishes and the map $\delta$ is injective.
\end{proof}

\section{Cohomology}\label{sec:cohomology}
If $E$ is a stable vector bundle on the smooth surface $X$ and $C\subset X$ a curve, then we can use the standard exact sequence
\[
0 \to E (-C) \to E \to i_\ast E|_C \to 0
\]
to compute the cohomology of $E|_C$, at least when the cohomology of $E$ is well-understood. The work of G\"ottsche-Hirschowitz on $\P^2$ \cite{goettsche-hirschowitz} and Coskun-Huizenga on Hirzebruch surfaces \cite{ch-hirzebruch} allows us to understand the cohomology of general elements of $M_{X,(H,D)}(\vec{v})$. After restricting to the curve $C$, we obtain some results that are interesting in the context of higher-rank Brill-Noether theory. In particular, we show that the number $h^0(C,E|_C)$ can violate the expected dimension count of the Brill-Noether number. In light of Proposition \ref{prop:extending_vb}, we conclude that there is a fairly large-codimensional subspace of $U_C(r,e)$ consisting of vector bundles with unexpectedly many global sections.

The \emph{Brill-Noether number}, denoted $\rho_{r,e}^k$ is the expected dimension of the subvariety $B_{r,e}^k$ of $U_C(r,e)$ consisting of rank $r$, degree $e$ stable sheaves with at least $k$ global sections. It is given by the following formula:
\[
\rho_{r,e}^k = r^2(g-1)+1 - k(k-e+r(g-1)) = \dim U_C(r,e) - k(k-\chi(E)),
\]
where $g$ is the genus of $C$.

\subsection{Plane curves}
We will exploit the following result of G\"ottsche-Hirschowitz that describes the cohomology for a general stable sheaf. For this subsection, $X = \P^2$.

\begin{thm*}[G\"ottsche-Hirschowitz \cite{goettsche-hirschowitz}] The general sheaf $E \in M(\vec{v})$ of rank $r \geq 2$ has at most one nonzero cohomology group:
\begin{enumerate}
	\item If $\chi(E) \geq 0$ and $\mu(E) > -3$, then $H^1(E) = H^2(E) = 0$.
	\item If $\chi(E) \geq 0$ and $\mu(E) \leq -3$, then $H^0(E)=H^1(E) = 0$.
	\item If $\chi(E) < 0$, then $H^0(E) = H^2(E) = 0$.
\end{enumerate}
\end{thm*}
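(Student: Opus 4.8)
The plan is to reduce the statement to a single structural fact --- \emph{weak Brill--Noether}: the general $E \in M(\vec{v})$ has \emph{natural cohomology}, meaning at most one of $H^0(E)$, $H^1(E)$, $H^2(E)$ is nonzero --- and then to extract the trichotomy by bookkeeping with $\chi(E)=h^0(E)-h^1(E)+h^2(E)$, using two vanishings valid for \emph{every} stable sheaf on $\P^2$. A nonzero section of $E$ gives an inclusion $\O_{\P^2}\hookrightarrow E$ of a subsheaf of rank $1<r$, which violates stability if $\mu(E)<0$ and strict stability if $\mu(E)=0$; so $\mu(E)\le 0$ forces $H^0(E)=0$. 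Dually $H^2(E)\cong\Hom(E,\O_{\P^2}(-3))^\vee$ by Serre duality, and a nonzero map $E\to\O_{\P^2}(-3)$ has image a proper quotient of slope $\le -3$, which violates (strict) stability if $\mu(E)\ge -3$; so $\mu(E)\ge -3$ forces $H^2(E)=0$. Granting natural cohomology: if $\chi(E)\ge 0$ and $\mu(E)>-3$, then $H^2(E)=0$ and $H^1(E)$ cannot be the surviving group (else $\chi(E)<0$), so $H^1(E)=H^2(E)=0$, which is case (1); if $\chi(E)\ge 0$ and $\mu(E)\le -3$, then $H^0(E)=0$, and natural cohomology with $\chi(E)\ge 0$ forces $H^1(E)=0$, which is case (2); if $\chi(E)<0$, the surviving group is forced to be $H^1(E)$, so $H^0(E)=H^2(E)=0$, which is case (3).

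It remains to prove weak Brill--Noether, where the real work lies. Since the loci $\{h^i\ge k\}$ are closed in the irreducible variety $M(\vec{v})$, it suffices to exhibit, for each $\vec{v}$ carrying stable sheaves, one stable $E$ with natural cohomology, and I would argue by induction on the discriminant $\Delta(\vec{v})$. The base case is $\Delta(\vec{v})$ at the Dr\'ezet--Le Potier boundary $\delta(\mu(\vec{v}))$, where the general stable sheaf is explicitly a kernel or cokernel of a map of exceptional bundles, so its cohomology --- equivalently the groups $\Ext^i(\O_{\P^2},E)$ --- is read off from exceptional-bundle combinatorics. For the inductive step, relate $M(\vec{v})$ to moduli of smaller discriminant by elementary modifications $0\to E'\to E\to \O_Z\to 0$ at general points and by restriction to a general line, $0\to E(-1)\to E\to E|_L\to 0$, with $E|_L$ balanced by Hirschowitz's theorem (a strengthening of Grauert--M\"ulich). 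In each case the relevant long exact sequence reduces natural cohomology of $E$ to natural cohomology of the smaller sheaf \emph{plus} a maximal-rank statement for a connecting or multiplication map; those maximal-rank statements are proved by the \emph{m\'ethode d'Horace}, degenerating the points or the curve so the needed vanishing splits into pieces supplied by the induction hypothesis.

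The main obstacle is precisely closing this induction. An elementary modification or line-restriction of a \emph{general} stable bundle need not stay general, so the points, the quotient, and the line must be chosen with care, and the characters near the boundary $\Delta(\vec{v})=\delta(\mu(\vec{v}))$ --- where $M(\vec{v})$ is smallest and exceptional bundles intrude --- must be handled separately; moreover the balanced-restriction input is itself a nontrivial theorem. A technically different but essentially equivalent route avoids points: for an exceptional bundle $F$, resolve the general $E\in M(\vec{v})$ by the Beilinson-type monad attached to a strong exceptional collection containing $F$, note that its maps are as generic as possible for general $E$, and conclude that at most one $\Ext^i(F,E)$ is nonzero; the case $F=\O_{\P^2}$, where $\Ext^i(\O_{\P^2},E)=H^i(E)$, then gives natural cohomology. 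Either way, what drives everything is the interaction of genericity in moduli with the exceptional-bundle structure of $\mathcal{D}^b(\P^2)$.
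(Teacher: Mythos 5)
The paper does not prove this statement: it is quoted verbatim from G\"ottsche--Hirschowitz \cite{goettsche-hirschowitz} and used as a black box in Section \ref{sec:cohomology}, so there is no internal proof to compare yours against.

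On its own merits, your proposal splits into two parts of very different status. The reduction of the trichotomy to weak Brill--Noether is correct and complete: the vanishing $H^0(E)=0$ for $\mu(E)<0$ via the subsheaf $\O_{\P^2}\hookrightarrow E$, the vanishing $H^2(E)\cong\Hom(E,\O_{\P^2}(-3))^\vee=0$ for $\mu(E)>-3$ via the rank-one quotient, and the bookkeeping with $\chi(E)$ under the assumption that at most one $H^i(E)$ survives all check out, and you only invoke the strict slope inequalities that stability actually provides in each case. The gap is that weak Brill--Noether itself --- which is the entire content of the theorem, since ``at most one nonzero cohomology group'' is part of the statement --- is only a strategy outline. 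You correctly identify the route (induction on the discriminant, base case at the Dr\'ezet--Le Potier boundary via resolutions by exceptional bundles, inductive step via elementary modifications and restriction to a balanced line, with the m\'ethode d'Horace supplying the maximal-rank statements), and you honestly flag that closing the induction is the hard part; but none of the maximal-rank claims, the genericity-preservation arguments, or the base-case combinatorics are actually carried out. As written this is a faithful description of how the known proof goes rather than a proof. Since the paper itself imports the result by citation, the appropriate move is either to do the same or to supply those missing steps; the sketch alone does not establish the statement.
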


We will make frequent use of the following Riemann-Roch calculation:

\begin{lemma}
Suppose $E$ is a rank $r$, degree $e$ stable bundle on $\P^2$, and let $C$ be a smooth curve of degree $d$. Then,
\begin{enumerate}
	\item $\chi(E) = r + 3e/2 + \ch_2(E)$.
	\item $\chi(E(-C)) = r + (3/2)(e-dr)+\ch_2(E) - de + rd^2/2$.
\end{enumerate}
\end{lemma}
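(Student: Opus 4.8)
The plan is to run the long exact sequence in cohomology attached to the standard short exact sequence
\[
0 \to E(-C) \to E \to i_\ast E|_C \to 0,
\]
and to use the G\"ottsche-Hirschowitz theorem to kill enough terms that only the two claimed isomorphisms survive. Since $C \subset \P^2$ is a smooth curve of degree $d$ we have $\O_{\P^2}(-C) \cong \O_{\P^2}(-d)$, and since $i_\ast E|_C$ is supported on a curve, $H^i(\P^2, i_\ast E|_C) = 0$ for $i \geq 2$. Thus the long exact sequence reads
\[
0 \to H^0(E(-C)) \to H^0(E) \to H^0(E|_C) \to H^1(E(-C)) \to H^1(E) \to H^1(E|_C) \to H^2(E(-C)) \to H^2(E) \to 0.
\]

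First I would pin down the vanishing of the outer groups. By G\"ottsche-Hirschowitz the general sheaf of rank $r \geq 2$ in a moduli space on $\P^2$ has at most one nonzero cohomology group; applying this to $E$, the hypothesis $H^0(E) \neq 0$ forces $H^1(E) = H^2(E) = 0$. Because twisting by $\O(-d)$ is an isomorphism $M(\vec{v}) \xrightarrow{\ \sim\ } M(\vec{v}(-d))$, the bundle $E(-C)$ is again general in its moduli space, so G\"ottsche-Hirschowitz applies to it as well; the hypothesis $H^2(E(-C)) \neq 0$ then forces $H^0(E(-C)) = H^1(E(-C)) = 0$. Substituting these four vanishings collapses the long exact sequence into the isomorphisms $H^0(E) \xrightarrow{\ \sim\ } H^0(E|_C)$ and $H^1(E|_C) \xrightarrow{\ \sim\ } H^2(E(-C))$, which gives the first equality in each of (1) and (2).

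It then remains to evaluate $\chi(E)$ and $\chi(E(-C))$. From $H^1(E) = H^2(E) = 0$ we get $h^0(E) = \chi(E)$, and Hirzebruch-Riemann-Roch on $\P^2$ with $\td(\P^2) = 1 + \tfrac{3}{2}H + H^2$ and $\ch_1(E) = eH$ yields $\chi(E) = r + \tfrac{3}{2}e + \ch_2(E)$, which is (1). Likewise $h^2(E(-C)) = \chi(E(-C))$; expanding $\ch(E(-C)) = \ch(E)\cdot e^{-dH}$ gives $\ch_1(E(-C)) = (e - dr)H$ and $\ch_2(E(-C)) = \ch_2(E) - de + \tfrac{rd^2}{2}$, and feeding these into Riemann-Roch produces the expression in (2). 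I do not expect a genuine obstacle here: the only step deserving comment is the claim that $E(-C)$ is general enough to invoke G\"ottsche-Hirschowitz, which follows at once from the twisting isomorphism of moduli spaces, and the remainder is a short Chern-character computation where the only risk is an arithmetic slip in the twist.
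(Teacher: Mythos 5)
Your proof is correct and follows the same route as the paper: run the long exact sequence of $0 \to E(-C) \to E \to i_\ast E|_C \to 0$, use G\"ottsche--Hirschowitz to conclude that $E$ and $E(-C)$ each have only one nonzero cohomology group (so the sequence collapses to the two isomorphisms), and finish with Riemann--Roch. The paper's proof is just a terser version of the same argument; your extra remark that $E(-d)$ remains general via the twisting isomorphism of moduli spaces, and your Chern character expansion, are both accurate.
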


Resolving $E$ via the sequence $0 \to E(-C) \to E \to i_\ast E|_C \to 0$ we are led to consider nine cases: $H^i(E) \neq 0$, $H^j(E(-C)) \neq 0$ for $i,j \in \{0,1,2\}$. In all but two cases, $H^0(E|_C) =0$ or $H^1(E|_C) =0$, and so there are no unexpected global sections. The two interesting cases are when $H^0(E) \neq 0$, $H^2(E(-C)) \neq 0$ and $H^1(E) \neq 0$, $H^1(E(-C)) \neq 0$. In the latter case, a direct computation shows that these bundles are Brill-Noether general when $\deg(C)$ is large:

\begin{prop}
Suppose $E$ is a general element of $M(\vec{v})$ such that $H^1(E) \neq 0$ and $H^1(E(-C)) \neq 0$ for a smooth degree $d$ curve $C$. Let $e = \deg(E)$ and $k=h^0(E|_C)$. Then $\rho_{r,de}^k \geq 0$ for $d \gg 0$.
\end{prop}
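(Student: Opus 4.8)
The plan is to reduce everything to the numerical inequality $\rho_{r,de}^k \ge 0$, where $\rho_{r,de}^k = r^2(g-1)+1 - k(k-\chi_C(E|_C))$ by the formula recorded just before the statement, and $g$ is the genus of the degree-$d$ plane curve $C$. First I would pin down $g$ via the adjunction formula on $\P^2$: $g = \binom{d-1}{2}$, so that $g-1 = \tfrac12(d^2-3d)$. Next I would compute the relevant numerical data of $E|_C$ using Lemma~\ref{lemma:ch_of_pushforward} and Riemann--Roch on $C$: since $i_\ast E|_C$ has Chern character $(0,\,dr,\,de - \tfrac{rd^2}{2})$ on $\P^2$ (here $c_1(E)\cdot C = de$ because $C$ has class $dH$ and $\deg E = e$), we get $\chi_C(E|_C) = \chi_{\P^2}(i_\ast E|_C) = de - \tfrac{rd^2}{2} + r(1-g)$, which I would rewrite cleanly in terms of $e,d,r$.

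The second ingredient is an upper bound on $k = h^0(E|_C)$. In the case $H^1(E)\ne 0$ and $H^1(E(-C))\ne 0$, the long exact sequence of $0\to E(-C)\to E\to i_\ast E|_C\to 0$ gives a surjection $H^1(E)\twoheadrightarrow H^1(i_\ast E|_C)$ only after the relevant $H^2$ terms, so as noted in the text $h^1(E|_C)\le h^1(E)$; combined with the G\"ottsche--Hirschowitz theorem, which forces the general $E$ to have at most one nonzero cohomology group, the hypothesis $H^1(E)\ne 0$ places us in the regime $\chi(E) < 0$ (equivalently $h^0(E)=h^2(E)=0$), so in fact $h^0(E)=0$ and hence, from the long exact sequence, $h^0(E|_C) = h^0(i_\ast E|_C)$ injects into $H^1(E(-C))$. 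I would therefore bound $k = h^0(E|_C) \le h^1(E(-C)) = -\chi(E(-C))$ (again using that $E(-C)$, being a twist of a general bundle, has only $H^1$ nonzero under the stated hypothesis), and compute $-\chi(E(-C)) = -\chi_{\P^2}(E) + (\text{correction from the twist by }\O(-d))$ via Riemann--Roch on $\P^2$. This turns the bound on $k$ into an explicit quadratic expression in $e,d,r$ and the fixed invariants of $\vec v$.

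With $g$, $\chi_C(E|_C)$, and the bound on $k$ all made explicit, the proposition becomes the assertion that a certain polynomial in $d$ (with coefficients depending on $r$ and $\vec v$) is nonnegative; I would verify this by direct algebra, using $\chi(E) < 0$ and $\Delta(E)\ge 0$ to control the lower-order terms, and noting that the leading behavior in $d$ comes out with a favorable sign because $\dim U_C(r,de) = r^2(g-1)+1$ grows like $\tfrac{r^2}{2}d^2$ while the penalty $k(k-\chi_C(E|_C))$ grows more slowly once $k$ is controlled by the $H^1(E(-C))$ bound. The main obstacle I anticipate is bookkeeping: keeping the twisted Chern characters and the $C = dH$ normalization consistent across the three Riemann--Roch computations (on $\P^2$ for $E$, on $\P^2$ for $E(-C)$, and on $C$ for $E|_C$), and making sure the inequality $k \le h^1(E(-C))$ is used in the right direction so that $k(k - \chi_C(E|_C))$ is genuinely bounded above — the estimate is only useful when $k - \chi_C(E|_C) \ge 0$, which must itself be checked from the sign hypotheses. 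Once the algebra is organized, the nonnegativity should be a short calculation rather than a subtle argument.
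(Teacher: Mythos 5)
The paper offers no argument for this proposition beyond the phrase ``a direct computation shows,'' so there is nothing to compare line by line; but your plan, taken on its own terms, breaks at the last step. Your setup is fine: G\"ottsche--Hirschowitz forces $\chi(E)<0$ and $h^0(E)=h^2(E)=0$, and likewise for $E(-C)$, so the long exact sequence collapses to
\[
0 \to H^0(E|_C) \to H^1(E(-C)) \to H^1(E) \to H^1(E|_C) \to 0,
\]
giving $k=h^0(E|_C)\le h^1(E(-C))$ and $k-\chi(E|_C)=h^1(E|_C)\le h^1(E)$. The problem is the concluding ``direct algebra.'' Those two bounds only yield
\[
k\bigl(k-\chi(E|_C)\bigr)\;\le\; h^1(E(-C))\,h^1(E)\;=\;r^2\bigl(\Delta(\vec v)-P(\mu)\bigr)\bigl(\Delta(\vec v)-P(\mu-d)\bigr),
\]
and this quantity is not bounded by $r^2(g-1)+1=r^2\tfrac{d^2-3d}{2}+1$: for fixed $d$ it grows like $r^2\Delta^2$, and even in your asymptotic regime ($d\to\infty$ with $\vec v$ fixed) it behaves like $\tfrac{r}{2}d^2\cdot h^1(E)$, which beats $\tfrac{r^2}{2}d^2$ as soon as $h^1(E)>r$. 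Concretely, for $r=2$, $c_1=-3H$, $c_2=100$, $d=4$ one has $h^1(E)=98$ and $h^1(E(-C))=82$, both nonzero as the hypotheses require, while $r^2(g-1)+1=9$; so no inequality of the form $h^1(E(-C))\,h^1(E)\le r^2(g-1)+1$ can be verified by bookkeeping.

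What is actually needed is control on the rank $m$ of the middle map $H^1(E(-C))\to H^1(E)$ (multiplication by the equation of $C$), equivalently on $h^0(E|_C)$ itself: one has $h^0(E|_C)\,h^1(E|_C)=(h^1(E(-C))-m)(h^1(E)-m)$, which is small only when $m$ is close to maximal. In the example above, $\rho^k_{r,de}\ge 0$ holds if and only if $h^0(E|_C)=0$, i.e.\ if and only if that map is injective for the general $E$. That is a genuine geometric input --- a maximal-rank statement for the multiplication map, or equivalently a weak Brill--Noether statement for the restricted bundle --- and it does not follow from applying G\"ottsche--Hirschowitz to $E$ and $E(-C)$ separately, which is all your proposal invokes. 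Until you supply that input (or impose an additional hypothesis bounding $\Delta(\vec v)$ in terms of $d$), the final inequality cannot be closed, and this is precisely the point where the paper's own ``direct computation'' would also need justification.
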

\begin{proof}
From the long exact sequence in cohomology we see that $h^0(E|_C) \leq h^1(E(-C))$ and $h^1(E|_C) \leq h^1(E)$. Thus,
\begin{align*}
\rho^k_{r,de} &\geq r^2(g-1)+1 + \left(r+\frac{3e}{2}+\ch_2(E)\right)\left( -r-\frac{3}{2}(e-dr)-\ch_2(E)+de-\frac{rd^2}{2}\right)\\
&= r^2\left(\frac{(d-1)(d-2)}{2}-1\right)+1+ \left(r+\frac{3e}{2}+\ch_2(E)\right)\left( -r-\frac{3}{2}(e-dr)-\ch_2(E)+de-\frac{rd^2}{2}\right)\\
&=\frac{d^2}{2}\left(\frac{-er}{2}-\ch_2(E)\right)+P(d),
\end{align*}
where $P(d)$ is a polynomial of degree one in $d$. Since $-er/2-\ch_2(E) = -\chi(E) + r$ and $r-\chi(E) > 0$, we see that for $d \gg 0$ we must have $\rho_{r,de}^k \geq 0.$
\end{proof}

For the other case, we have:

\begin{prop}
Let $C$ be a smooth plane curve of degree $d$, and suppose $E$ is a general element of $M(\vec{v})$ such that $H^0(E) \neq 0$ and $H^2(E(-C)) \neq 0$. Suppose further that $\chi(E) > r$. Let $e = \deg(E)$ and $k = h^0(E|_C)$. Then $\rho^{k}_{r,de}<0$ but $B^k_{r,de}$ is nonempty for $d \gg 0$.
\end{prop}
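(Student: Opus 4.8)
The plan is to establish the two assertions separately: first that the Brill--Noether number $\rho^k_{r,de}$ is negative, and then that $B^k_{r,de}$ is nonempty. The first is a direct computation using the previous lemma. The hypotheses $H^0(E) \neq 0$, $H^2(E(-C))\neq 0$ place us in the one remaining ``interesting'' case, so by the lemma we have $k = h^0(E|_C) = h^0(E) = r + 3e/2 + \ch_2(E) = \chi(E)$, using the Göttsche--Hirschowitz theorem to identify $h^0(E)$ with $\chi(E)$ (since $E$ has a single nonzero cohomology group and $\chi(E)\ge 0$ here). Then $k - \chi(E|_C) = k - \chi(i_\ast E|_C)$, and from the exact sequence $0 \to E(-C) \to E \to i_\ast E|_C \to 0$ we get $\chi(i_\ast E|_C) = \chi(E) - \chi(E(-C))$; since $H^2(E(-C))\neq 0$ and $E(-C)$ has a single nonzero cohomology group, $\chi(E(-C)) = h^2(E(-C)) > 0$. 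Hence $k - \chi(E|_C) = \chi(E(-C)) = h^1(E|_C) > 0$. Plugging into $\rho^k_{r,de} = \dim U_C(r,de) - k\bigl(k - \chi(E|_C)\bigr)$, we must check that $k\cdot h^1(E|_C)$ exceeds $\dim U_C(r,de) = r^2(g-1)+1$. Here $g$ grows like $d^2/2$ by adjunction, $h^1(E|_C) = r + (3/2)(e - dr) + \ch_2(E) - de + rd^2/2$ also grows like $rd^2/2$, while $k = \chi(E)$ is independent of $d$; the extra hypothesis $\chi(E) > r$, i.e.\ $k > r$, is exactly what forces $k \cdot (rd^2/2 + O(d)) > r^2 d^2/2 + O(d)$ for $d \gg 0$. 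So $\rho^k_{r,de} < 0$ for $d$ large.

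For the nonemptiness of $B^k_{r,de}$, the idea is to produce an honest point: the restriction $E|_C$ of a general $E \in M(\vec v)$ itself lies in $B^k_{r,de}$. We need $E|_C$ to be a semistable (torsion-free) sheaf on $C$ of rank $r$ and degree $de = c_1(E)\cdot C$, and to have $h^0(E|_C) \ge k$. Semistability and the Chern data come from Theorem~\ref{thm:restriction_p2}: whenever $d^2 > 8\Delta(\vec v) + 4$ — which holds for $d \gg 0$ — the restriction $E|_C$ is slope stable for general $E$, and Lemma~\ref{lemma:ch_of_pushforward} gives its degree. That $h^0(E|_C) = k$ is exactly the computation above. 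Therefore $E|_C$ defines a point of $B^k_{r,de}$, so this locus is nonempty for $d \gg 0$, while at the same time $\rho^k_{r,de} < 0$.

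The step I expect to require the most care is the asymptotic comparison $k \cdot h^1(E|_C) > \dim U_C(r,de)$: one must be careful that $k = \chi(E)$ is genuinely a fixed integer while everything else scales with $d$, and that the inequality $\chi(E) > r$ is used in its sharp form (if $\chi(E) = r$ one would only get equality of leading coefficients and the sign would be governed by lower-order terms, which need not cooperate). Concretely I would write $g - 1 = \tfrac{1}{2}d(d-3)$ from adjunction on $\P^2$, expand both $k\,h^1(E|_C)$ and $r^2(g-1)+1$ as polynomials in $d$, and observe that the leading term of the former minus the latter is $\tfrac{r}{2}(k - r)d^2 > 0$, so the difference is positive once $d$ exceeds an explicit bound depending only on $r$, $e$, $\ch_2(E)$. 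A secondary point worth stating explicitly is that all the cohomological hypotheses ($H^0(E)\neq 0$, $H^2(E(-C))\neq 0$, single nonzero cohomology group) are preserved for general $E \in M(\vec v)$ and are compatible with $d \gg 0$, which is what lets us simultaneously invoke the genericity needed for the restriction theorem and for the Göttsche--Hirschowitz vanishing.
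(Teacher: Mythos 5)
Your proposal is correct and follows essentially the same route as the paper: compute $k=\chi(E)$ and $h^1(E|_C)=\chi(E(-C))$ from the preceding lemma, compare the $d^2$-coefficients of $k\cdot h^1(E|_C)$ and $r^2(g-1)+1$ to see that $\chi(E)>r$ forces $\rho^k_{r,de}<0$ for $d\gg 0$, and get nonemptiness from stability of the restriction for large $d$. Your explicit identification of the leading coefficient of the difference as $\tfrac{r}{2}(k-r)d^2$ is a slightly cleaner packaging of the paper's term-by-term domination argument, but the content is the same.
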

\begin{proof}
By the long exact sequence in cohomology, it follows that $\rho^k_{r,de} < 0$ if
\[
r^2(g-1) +1 < \left(r+\frac{3e}{2}+\ch_2(E)\right)\left(r+\frac{3e}{2}+\ch_2(E) - de + r(g-1)\right).
\]
Since $g=(d-1)(d-2)/2$, the left-hand side is dominated by the term $d^2r^2/2$ and the right-hand side is dominated by the term $d^2(r^2/2 + 3er/4+r\ch_2(E)/2)$. Thus the inequality holds for large $d$ if $r^2/2 + 3er/4 + r\ch_2(E)/2 > r^2/2$. This is equivalent to the inequality $\chi(E) > r$. Note that $\mu(E(-C)) = \mu(E) - d$, so the hypotheses are preserved by taking $d$ large. The restriction $E|_C$ is semistable for $d\gg 0$, thus the locus $B^{k}_{r,de}$ is nonempty.
\end{proof}

Note that if $r$ and $\ch_2(E)$ are fixed, then $h^0(E)$ grows with $e$ and thus the hypotheses of the above theorem hold for fixed $\ch_2(E)$ with $d \gg 0$ and $e \gg 0$.

\subsection{Curves on Hirzebruch surfaces}
Coskun-Huizenga \cite{ch-hirzebruch} computed the cohomology of a general stable sheaf on a Hirzebruch surface. Let $X = \mathbb{F}_m$ be the Hirzebruch surface $X = \P(\O_{\P^1} \oplus \O_{\P^1}(m))$, $m >0$. Let $M$ denote the curve class of self-intersection $-m$ and $F$ the class of a fiber. If $\vec{v}=(r,c_1,\ch_2)$ is a stable Chern character on $X$, then define $v(\vec{v}) = c_1/r$. The cohomology of the general element $E \in M_{X,(H,D)}(\vec{v})$ can be determined with the intersection numbers $v(\vec{v}) \cdot M$ and $v(\vec{v}) \cdot F$ in addition to the Euler characteristic $\chi(\vec{v})$.

\begin{thm}[\protect{\cite[Theorem 3.1]{ch-hirzebruch}}] Let $\vec{v}$ be a stable Chern character on $\mathbb{F}_m$ with positive rank and $E \in M_{X,(H,D)}(\vec{v})$ a general stable sheaf. Then
\begin{enumerate}
	\item If $v(\vec{v}) \cdot F \geq -1$, then $h^2(E) = 0$.
	\item If $v(\vec{v}) \cdot F \leq -1$, then $h^0(E) = 0$.
	\item If $v(\vec{v}) \cdot F = -1$, then $h^1(E) = -\chi(\vec{v})$ and all other cohomology vanishes.
\end{enumerate}
Suppose now $v(\vec{v}) \cdot F > -1$. Then either of the numbers $h^0(E)$ or $h^1(E)$ determines the Betti numbers of $E$:
\begin{enumerate}
\setcounter{enumi}{3}
	\item If $v(\vec{v}) \cdot M \geq -1$, then $E$ has at most one nonzero cohomology group. If $\chi(\vec{v}) \geq 0$ then $h^0(E) = \chi(\vec{v})$, and if $\chi(\vec{v}) \leq 0$, then $h^1(E) = - \chi(\vec{v})$.
	\item If $v(\vec{v}) \cdot M < -1$, then $H^0(E) \cong H^0(E(-M))$.
\end{enumerate}
If $v(\vec{v}) \cdot F < -1$ and $r\geq 2$, then the cohomology of $E$ can be determined by Serre duality.
\end{thm}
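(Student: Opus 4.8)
The plan is to reduce each assertion to producing a \emph{single} semistable sheaf with the stated Betti numbers, to exploit the ruling $\pi\colon \mathbb{F}_m \to \P^1$ to separate a ``fiber direction'' from a ``section direction'', and to use Serre duality on $X$ to collapse the genuinely independent cases.

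First I would note that $M_{X,(H,D)}(\vec{v})$ is irreducible (O'Grady-type results for $\Delta(\vec{v})\gg 0$, and the corresponding statement for rational ruled surfaces) and that $\chi(E)=\chi(\vec{v})$ is fixed on it; hence by upper semicontinuity of $h^i$ it suffices, in each case, to exhibit one torsion-free semistable sheaf of Chern character $\vec{v}$ realizing the claimed vanishing, the remaining Betti number being then forced by $\chi$. As usual such sheaves are built by induction on $\Delta(\vec{v})$: starting from base cases ($\pi^\ast G\otimes\O_X(aM+bF)$ for a suitable bundle $G$ on $\P^1$, or a rigid bundle, whose cohomology is immediate from the Leray spectral sequence), an elementary modification $0\to E'\to E\to k(p)\to 0$ at a general point $p$ raises $\Delta$ by $1/r$ and lowers $\chi$ by $1$ while changing $h^0$ by $0$ or $-1$; iterating sweeps out all discriminants above the base value while keeping a general member (semi)stable.

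The geometric content lies in the ruling. Since $\dim\P^1=1$ the Leray spectral sequence for $\pi$ degenerates, giving
\[
H^0(X,E)=H^0(\P^1,\pi_\ast E),\qquad H^2(X,E)=H^1(\P^1,R^1\pi_\ast E),
\]
together with a short exact sequence $0\to H^1(\P^1,\pi_\ast E)\to H^1(X,E)\to H^0(\P^1,R^1\pi_\ast E)\to 0$. A general stable $E$ restricts to a balanced bundle on a general fiber $F\cong\P^1$ (a restriction theorem to the rational curves $F$, provable directly on the ruled surface), so when $v(\vec{v})\cdot F\geq -1$ every summand of $E|_F$ has degree $\geq -1$; then $R^1\pi_\ast E$ is torsion and $H^2(X,E)=H^1(\P^1,R^1\pi_\ast E)=0$, which is (1). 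Part (2) is the Serre dual: $\omega_X\cdot F=-2$, so $E^\vee\otimes\omega_X$ has fiber degree $-v(\vec{v})\cdot F-2\geq -1$ when $v(\vec{v})\cdot F\leq -1$, and $h^0(E)=h^2(E^\vee\otimes\omega_X)=0$ by (1) applied to this (again general, stable) twist; (3) is the overlap $v(\vec{v})\cdot F=-1$, where both vanishings hold, $\chi(\vec{v})\leq 0$, and $h^1(E)=-\chi(\vec{v})$; and the final clause ($v(\vec{v})\cdot F<-1$) is likewise reduced to the $v(\vec{v})\cdot F>-1$ regime by the same duality. For (4)--(5) assume $v(\vec{v})\cdot F>-1$, so $h^2(E)=0$ and $\pi_\ast E$ is a rank-$r$ bundle on $\P^1$ with $\chi(\P^1,\pi_\ast E)=\chi(\vec{v})$; the point is to pin down its splitting type. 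Restricting instead along the section $M$ via $0\to E(-M)\to E\to i_{M\ast}(E|_M)\to 0$ brings in $v(\vec{v})\cdot M$: when $v(\vec{v})\cdot M<-1$ the bundle $E|_M$ on $M\cong\P^1$ has no sections, forcing $H^0(X,E)\cong H^0(X,E(-M))$, which is (5); when $v(\vec{v})\cdot M\geq -1$ one shows $\pi_\ast E$ is balanced on $\P^1$ for general $E$, hence has at most one nonzero cohomology group, so $h^0(E)=\chi(\vec{v})$ if $\chi(\vec{v})\geq 0$ and dually $h^1(E)=-\chi(\vec{v})$ if $\chi(\vec{v})\leq 0$, which is (4).

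The main obstacle is the genericity transfer: one must show that the generic behaviour of $E$ on the surface forces $\pi_\ast E$ (resp.\ $R^1\pi_\ast E$) to be a \emph{balanced} bundle on $\P^1$, not merely one of the correct rank and degree. This is exactly what the induction on $\Delta$ is designed to control, and the delicate bookkeeping occurs near the walls $v(\vec{v})\cdot F=-1$ and $v(\vec{v})\cdot M=-1$, where one must simultaneously (i) keep the modified sheaf semistable with a general stable member, (ii) ensure the modification at a general point moves $h^0$ by the predicted amount, and (iii) verify that the base-case bundles $\pi^\ast G\otimes\O_X(aM+bF)$ lie in the closure of the stable locus for the relevant $\vec{v}$. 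The remaining steps are routine spectral-sequence and Serre-duality bookkeeping.
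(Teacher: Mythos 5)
This theorem is not proved in the paper at all: it is quoted verbatim from \cite[Theorem 3.1]{ch-hirzebruch}, so there is no in-paper argument to compare against, and the relevant comparison is with the proof in that reference. Your sketch does follow the same broad lines as Coskun--Huizenga's argument: Leray along the ruling $\pi\colon\mathbb{F}_m\to\P^1$, balancedness of the restriction to a general fiber, Serre duality to fold the $v(\vec{v})\cdot F<-1$ cases into the $v(\vec{v})\cdot F>-1$ ones, elementary modifications to sweep out discriminants, and semicontinuity from an explicitly computable member of the family.

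The genuine gap is in the step you yourself flag as ``the main obstacle,'' and the fix is not the one you propose. First, O'Grady's irreducibility theorem applies only for $\Delta(\vec{v})\gg 0$, whereas the statement is for every stable Chern character; ``the corresponding statement for rational ruled surfaces'' is exactly what needs a proof or citation. Second, and more seriously, your item (iii) --- that the base-case bundles $\pi^\ast G\otimes\O_X(aM+bF)$ lie in the closure of the stable locus --- is generally false inside $M_{X,(H,D)}(\vec{v})$: a direct sum of line bundles is unstable and is not a specialization of stable sheaves within the moduli space of semistable sheaves, so upper semicontinuity of $h^i$ cannot be run there. The device that makes the argument work in \cite{ch-hirzebruch} is to enlarge the parameter space to the stack of $F$-prioritary sheaves, i.e.\ torsion-free $E$ with $\Ext^2(E,E(-F))=0$: by Walter's theorem this stack is irreducible for any character on a ruled surface, it contains the stable sheaves as a dense open substack (stable sheaves are prioritary for suitable polarizations), and it also contains the split bundles and their elementary modifications. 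Semicontinuity on this irreducible stack is what legitimately transfers the cohomology computation from the explicit member to the general stable sheaf; without it (or an equivalent device) your reduction to ``exhibit one sheaf with the stated Betti numbers'' does not reach the general stable $E$. The remaining spectral-sequence and Serre-duality bookkeeping in your sketch is consistent with the cited proof.
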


\begin{lemma}
Let $X = \mathbb{F}_m$. Then:
\begin{enumerate}
	\item $K_X = -2M-(e+2)F$, and if $C$ is a smooth curve of class $aM + bF$, then $g(C) = \frac{1}{2}(1-a)(am-2b+2).$
	\item Let $H = aM + bF$. Then $H$ is ample if and only if $a>0$ and $b>am$. If $H$ is ample, $C$ a curve of class $dH$, and $E$ a $\mu_H$-stable sheaf of rank $r \geq 2$, then $E|_C$ is semistable if
\[
d > 2r(r-1)\Delta(E)+\frac{1}{2r(r-1)(ab-a^2m)}.
\]
\end{enumerate}
\begin{proof}
Statement (1) is a direct adjunction computation, and statement (2) is Theorem \ref{thm:restriction_allsurfaces}.
\end{proof}
\end{lemma}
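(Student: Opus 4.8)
The plan is to dispatch the two parts independently: part (1) is pure intersection theory on $\mathbb{F}_m$, and part (2) is the specialization of Theorem \ref{thm:restriction_allsurfaces} to the polarization $H = aM+bF$ and a curve in the class $dH$.

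For part (1), I would work in $N^1(\mathbb{F}_m)$, which is freely generated by the class $M$ of the negative section and the class $F$ of a fiber, with $M^2 = -m$, $M\cdot F = 1$, $F^2 = 0$ (here the $e$ in the statement should read $m$). The formula for $K_X$ comes out of the relative Euler sequence for the $\mathbb{P}^1$-bundle $\mathbb{F}_m\to\mathbb{P}^1$ together with $K_{\mathbb{P}^1} = -2[\mathrm{pt}]$; alternatively one pins down the two coefficients of $K_X = \alpha M + \beta F$ by applying adjunction along the rational curves $M$ and $F$, which gives $M\cdot K_X = -2-M^2 = m-2$ and $F\cdot K_X = -2$, hence $K_X = -2M-(m+2)F$. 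For the genus of a smooth $C$ of class $aM+bF$ I would substitute $C^2 = 2ab - a^2m$ and $C\cdot K_X = am-2a-2b$ into the adjunction formula $2g(C)-2 = C\cdot(C+K_X)$ and simplify; collecting terms produces exactly $g(C) = \frac{1}{2}(1-a)(am-2b+2)$.

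For part (2), ampleness of $H = aM+bF$ is read off from the fact that the cone of curves of $\mathbb{F}_m$ is spanned by the classes $[M]$ and $[F]$: by the Nakai--Moishezon criterion (or Kleiman's criterion) it suffices that $H$ pair positively with these two classes, i.e.\ $H\cdot F = a>0$ and $H\cdot M = b-am>0$, and conversely these inequalities are forced when $H$ is ample; one checks along the way that $H^2 = a(2b-am)>0$ under these hypotheses. For the restriction statement I would apply Theorem \ref{thm:restriction_allsurfaces} with $D=0$, which is legitimate since $\mu_H$-stability coincides with $\mu_{H,D}$-stability for every $D$, so in particular $\Delta_{H,0}(E)=\Delta_H(E)$; I take $C$ in the class $dH$, which for $d$ in the relevant range we may assume integral (indeed smooth) by Bertini once $dH$ is very ample. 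Substituting $C = dH$ collapses the left-hand side of the hypothesis of Theorem \ref{thm:restriction_allsurfaces}, since $\frac{C^2}{2H\cdot C} = \frac{d}{2}$ and $\frac{\ch_1^D(E)\cdot C}{rH\cdot C} = \mu_{H,D}(E)$, so the condition becomes $\frac{d}{2} > r(r-1)H^2\Delta_H(E) + \frac{1}{2r(r-1)H^2}$. Inserting $H^2 = 2ab-a^2m$ and discarding the lower-order term $\frac{1}{2r(r-1)H^2}$ yields a sufficient condition of the advertised shape $d > c\cdot r(r-1)(ab-a^2m)\Delta_H(E)$; I would want to double-check the precise leading constant against the normalization of $\Delta_H$ before committing to $(ab-a^2m)$ versus $(2ab-a^2m)$.

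There is no genuinely hard step here: all of the stability input is already packaged in Theorem \ref{thm:restriction_allsurfaces}, and the rest is standard geometry of Hirzebruch surfaces. The only points demanding care are getting the intersection numbers and canonical class of $\mathbb{F}_m$ right (the sign conventions, and the $e$/$m$ discrepancy in the statement), ensuring a member of $|dH|$ can be taken integral so that Theorem \ref{thm:restriction_allsurfaces} applies verbatim, and reconciling the bookkeeping in the substitution $C = dH$ — which discriminant ($\Delta_H = \Delta_{H,0}$) and which numerical constant — with the clean inequality as stated in the lemma.
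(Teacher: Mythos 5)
Your argument is exactly the paper's: the proof given there is the one-line assertion that (1) is an adjunction computation and (2) is Theorem \ref{thm:restriction_allsurfaces}, and your computations fill this in correctly (including spotting that the $e$ in the formula for $K_X$ should be $m$, and verifying $C^2$, $C\cdot K_X$, and the Nakai--Moishezon ampleness criterion). Your closing worry about the constant is justified and worth recording: substituting $C=dH$, $D=0$ into Theorem \ref{thm:restriction_allsurfaces} gives the sufficient condition $d > 2r(r-1)H^2\Delta_H(E) + \tfrac{1}{r(r-1)H^2}$ with $H^2 = 2ab-a^2m$, so the coefficient printed in the lemma ($ab-a^2m$) does not equal $H^2$, and the positive additive term $\tfrac{1}{r(r-1)H^2}$ cannot simply be discarded from the right-hand side of a sufficient condition without further argument --- but these are defects of the statement as printed, not of your derivation.
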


In the same spirit as the $\P^2$ case, we are interested in understanding the asymptotic behavior of $h^0(E|_C)$ when $a \to \infty$ and when $b \to \infty$. Note that we have the following:
\[
\begin{rcases}
v(E(-C)) \cdot M &\to \infty\\
\chi(E(-C)) &\to -\infty
\end{rcases}
\text{ as }a \to \infty,
\qquad
\begin{rcases}
v(E(-C)) \cdot M &\to -\infty\\
\chi(E(-C)) &\to \infty
\end{rcases}
\text{ as }b \to \infty
\]

As with $\P^2$, the interesting cases occur when $E|_C$ has both $H^0$ and $H^1$. Because of the behavior of the Betti numbers of $E(-C)$ as $a$ and $b$ grow independently, there are two possibilities: Either $E$ has only $H^0$ and $E(-C)$ has both $H^1$ and $H^2$, or $E$ and $E(-C)$ have only $H^1$. The latter case can only occur when $[C] = adM + bdF$ with $v(E(-C)) \cdot M \geq -1$, $\chi(E(-C)) < 0$, and $a+b \gg 0$. As with the analogous case on $\P^2$, this does not produce any stable bundles on $C$ with unexpectedly many global sections. On the other hand, we have:

\begin{prop}
Let $H = aM + bF$ be ample, and let $E$ be a general element of $M_{X,(H,D)}(\vec{v})$ for some stable Chern character $\vec{v}$ such that $v(\vec{v}) \cdot F < 0$ and $E$ has only $H^0$. Let $C$ be a smooth curve of class $dH$, and put $k=h^0(C,E|_C)$, $e = \deg(E|_C)$. If $\chi(E) > r$ and $a \geq 2$, then $\rho^{k}_{r,e} < 0$, but $B^k_{r,e}$ is nonempty for $b,d \gg 0$.
\end{prop}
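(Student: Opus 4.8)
The plan is to resolve $E|_C$ with the standard sequence
\[
0 \to E(-C) \to E \to i_\ast E|_C \to 0,
\]
extract $k = h^0(C,E|_C) = h^0(X,i_\ast E|_C)$ and $\chi(E|_C) = \chi(i_\ast E|_C)$ from the associated long exact sequence together with Riemann--Roch, and then compare $k(k-\chi(E|_C))$ with $\dim U_C(r,e) = r^2(g-1)+1$; since $\rho^k_{r,e} = r^2(g-1)+1 - k\bigl(k-\chi(E|_C)\bigr)$, it is enough to make this product large.

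First I would pin down the cohomology of the two outer terms. Since the general $E$ has only $H^0$, we have $h^0(E) = \chi(E)$ and $h^1(E) = h^2(E) = 0$. Twisting by $\O_X(-C)$ is an autoequivalence inducing an isomorphism of moduli spaces, so $E(-C)$ is again a general stable sheaf on $\mathbb{F}_m$; because $v(E(-C)) \cdot F = v(\vec{v})\cdot F - d(H\cdot F) = v(\vec{v})\cdot F - da < -1$ (using $v(\vec{v})\cdot F < 0$, $a \geq 2$, and $H\cdot F = a$), the Coskun--Huizenga theorem \cite{ch-hirzebruch} gives $h^0(E(-C)) = 0$. Feeding $h^0(E(-C)) = 0$ and $h^1(E) = h^2(E) = 0$ into the long exact sequence yields
\[
k = h^0(E) + h^1(E(-C)) = \chi(E) + h^1(E(-C)), \qquad \chi(E|_C) = \chi(E) - \chi(E(-C)),
\]
and hence $k - \chi(E|_C) = \chi(E(-C)) + h^1(E(-C)) = h^2(E(-C))$. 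Using $h^1(E(-C)) \geq 0$ and $h^2(E(-C)) = \chi(E(-C)) + h^1(E(-C)) \geq \chi(E(-C))$, together with $\chi(E(-C)) > 0$ for $d$ large, we obtain
\[
\rho^k_{r,e} = r^2(g-1)+1 - \bigl(\chi(E) + h^1(E(-C))\bigr)h^2(E(-C)) \;\leq\; r^2(g-1)+1 - \chi(E)\,\chi(E(-C)).
\]

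It then suffices to show $\chi(E)\chi(E(-C)) > r^2(g-1)+1$ for $b,d \gg 0$. Adjunction gives $g-1 = \tfrac{1}{2}\bigl(C^2 + C\cdot K_X\bigr)$, and Riemann--Roch gives $\chi(E(-C)) = \chi(E) - c_1(E)\cdot C + r(g-1)$. Writing $C = dH$ with $H^2 = 2ab-a^2m > 0$ and letting $d \to \infty$, the leading terms are $r^2(g-1) = \tfrac{1}{2} r^2 H^2 d^2 + O(d)$ and $\chi(E(-C)) = \tfrac{1}{2} r H^2 d^2 + O(d)$, so
\[
\chi(E)\chi(E(-C)) - r^2(g-1) - 1 = \tfrac{1}{2} r H^2 d^2\bigl(\chi(E) - r\bigr) + O(d) \longrightarrow +\infty
\]
precisely because $\chi(E) > r$. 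Thus $\rho^k_{r,e} < 0$ once $H = aM+bF$ is ample (which holds for $b \gg 0$) and $d$ is then taken large in terms of $b$. Finally $E|_C$ is a rank $r$, degree $e$ vector bundle on $C$ with $h^0 = k$, and $E$, being general, is $\mu_H$-stable, so by the preceding lemma $E|_C$ is semistable as soon as $d > 2r(r-1)(ab-a^2m)\Delta_H(E)$; enlarging $d$ to satisfy this bound as well, $[E|_C] \in B^k_{r,e} \subset U_C(r,e)$, so that locus is nonempty.

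I expect the genuine difficulty to be the cohomology identification rather than the numerical estimate. One must know that a single general $E$ simultaneously has the Coskun--Huizenga cohomology both for $E$ itself and for every twist $E(-C)$ (each is an open, dense condition, and then semistability of the restriction holds automatically for $d$ large), and one must keep track of which nonvanishing cohomology group of $E(-C)$ feeds into $k$ and which into $\chi(E|_C)$; this is where the hypotheses $a \geq 2$ and $v(\vec{v})\cdot F < 0$ are used, through the sign of $v(E(-C))\cdot F$. The asymptotic comparison and the appeal to the restriction lemma are then routine.
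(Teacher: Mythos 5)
Your proof is correct and follows essentially the same route as the paper: the long exact sequence of $0 \to E(-C) \to E \to i_\ast E|_C \to 0$, the Coskun--Huizenga theorem applied to $E(-C)$ via $v(E(-C))\cdot F = v(\vec{v})\cdot F - da < -1$ to get $h^0(E(-C))=0$, reduction to the inequality $\chi(E)\chi(E(-C)) > r^2(g-1)+1$, and the restriction theorem for semistability of $E|_C$. The only (immaterial) difference is that you fix an ample $H$ and let $d\to\infty$, extracting the leading coefficient $\tfrac{1}{2}rH^2d^2(\chi(E)-r)$, whereas the paper lets $b\to\infty$; your bookkeeping of $k-\chi(E|_C)=h^2(E(-C))\ge\chi(E(-C))$ is in fact slightly more careful than the paper's.
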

\begin{proof}
Write $c_1(E) = xM + yF$. Then $x<0$ by assumption and $v(E(-C))\cdot F = x/r-ad < -1$. Thus $h^0(E(-C)) = 0$. Since $h^2(E|_C) = 0$, we have $h^0(E|_C) \geq h^0(E) = \chi(E)$. It therefore suffices to show
\begin{equation}\label{inequality:hirzebruch}
r^2\left(g-1)\right)+1 < \chi(E)\left(\chi(E)-e+rg\right).
\end{equation}
Since $g = \frac{1}{2}(1-ad)(adm-2db+2)$, we see that $g \to \infty$ as $b\to \infty$. Then $e = d(-amx + bx + ay)$ with $x < 0$. Since we have also assumed that $\chi(E) > r$, the right side dominates the left for $b \gg 0$. Further, as $b$ grows, $v(E(-C)) \cdot F$ is unchanged and $h^0(E(-C)) = 0$. We may therefore find $b$ sufficiently large so that Inequality (\ref{inequality:hirzebruch}) holds for all $d$. Since $E|_C$ is semistable for large $d$, the claim follows.
\end{proof}

\bibliographystyle{plain}

\end{document}